\newtheorem{thm}{Theorem}[section]
\newtheorem{lem}[thm]{Lemma}
\newtheorem{pro}[thm]{Proposition}
\theoremstyle{definition}
\newtheorem{definition}[thm]{Definition}
\newtheorem{remark}[thm]{Remark}
\numberwithin{equation}{section}
\def\bR{\mathbb{R}}
\def\bT{\mathbb{T}}
\def\cD{\mathcal{D}}
\def\cG{\mathcal{G}}
\def\cJ{\mathcal{J}}
\def\cR{\mathcal{R}}
\def\cU{\mathcal{U}}
\def\cV{\mathcal{V}}
\def\cW{\mathcal{W}}
\def\ds{\displaystyle}
\begin{document}

\title[]{Homoclinic intersections for geodesic flows on convex spheres}

\author[Z. Xia]{Zhihong Xia}
\address{Department of Mathematics,
Southern University of Science and Technology,
Shenzhen, Guangdong, China 518055;
permanent address: Department of Mathematics, Northwestern University, Evanston, IL 60208}
%\curraddr{}
\email{xia@math.northwestern.edu}

\author[P. Zhang]{Pengfei Zhang}
\address{Department of Mathematics,
University of Mississippi,
Oxford, MS 38677}
\email{pzhang2@olemiss.edu}

\subjclass[2000]{37C20, 37C29, 37D40}

\keywords{convex spheres, geodesic flow, closed geodesic, elliptic geodesic,
hyperbolic geodesic, transverse homoclinic intersections, prime-end compactification,
nonlinearly stable}

\begin{abstract}
In this paper, we study some generic properties
of the geodesic flows on a convex  sphere.
We prove that, $C^r$ generically ($2\le r\le\infty$),
every hyperbolic closed geodesic on $S^2$
admits some  transverse homoclinic intersections.
\end{abstract}

\maketitle

\section{Introduction}\label{sec-intro}

Let $f:M\to M$ be a  diffeomorphism on a closed manifold $M$,
$p$ be a \emph{hyperbolic} periodic point of period $n$,
and  $W^{s,u}(p)$ be the stable and  unstable
manifolds of $p$.
 A point $x\in W^s(p)\cap W^u(q)$
is called a  \emph{heteroclinic intersection} (a  \emph{homoclinic intersection} if $q=p$).
The intersection $W^s(p)\cap W^u(q)$ at $x$ is said to be  \emph{transverse}
if $T_xW^s(p)+ T_xW^u(q)=T_xM$.
Transverse homoclinic intersections play an important role in the birth of
the concept {\it Dynamical Systems}.
The complexity caused by
transverse homoclinic intersections made Poincar\'e once to believe
such phenomenon cannot exist among the solutions of the $n$-body problem
(see \cite{Dia}, or Page 167 of \cite{Mei}).
Later in \cite{Poin}, Poincar\'e described the first geometric picture
that every transverse homoclinic intersection is accumulated by
infinitely many other homoclinic intersections, and this mechanism generates
various complicated dynamical behaviors.
The study of transverse homoclinic intersections
was developed by Birkhoff  \cite{Bir35} and
then by Smale \cite{Sma65}.
The geometric model,
now called {\it Smale horseshoe}, gives a symbolic coding of the dynamics around
a transverse homoclinic intersection, and paves the way for
a systematic study of  dynamical systems with some hyperbolicity.

In this paper, we study the  existence of homoclinic intersections
 for the geodesic flows on $2$-sphere $S^2$.
Given a Riemannian metric $g$ on $S^2$,
let $K_g(x)$ be the Gauss curvature at $x$ for each $x\in S^2$.
\begin{definition}
A Riemannian metric $g$ on $S^2$ is said to be {\it convex},
if its Gauss curvature satisfies $K_g(x)>0$ for all $x\in S^2$.
Then a sphere $(S^2,g)$ is said to be {\it convex} if the metric $g$ is convex.
\end{definition}

\begin{remark}
Weyl conjectured in 1916 that for each convex sphere $(S^2,g)$,
there exists a smooth and isometric embedding $(S^2,g)\hookrightarrow \bR^3$.
This conjecture was proved independently by
Pogorelov \cite{Pog} and Nirenberg \cite{Nir}.
Therefore, each convex sphere $(S^2,g)$ is the boundary of a smooth convex
body in $\bR^3$.
\end{remark}

For each $2\le r\le \infty$,
let $\cG^r$ be the set of $C^r$-smooth Riemannian metrics on
the sphere  $S^2$,
and $\cG^r_+\subset \cG^r$ be the subset of convex metrics $g$ on $S^2$.
Endowed with $C^r$ topology, both sets $\cG^r$ and $\cG^r_+$ are  Baire spaces.

Given a Riemannian metric $g$ on $S^2$, let $\phi_t^g$ be the induced geodesic
flow on the tangent bundle $TS^2$. Note that the geodesic flow preserves
the length of tangent vectors. So one usually considers the restriction of $\phi_t^g$
on the unit tangent bundle $M_g:=\{(x,v)\in TS^2:\|v\|_{g(x)}=1\}$.

The  dynamical properties of geodesic flows
on $S^2$ have been studied extensively.
The simple topology of $S^2$ leads one to believe that
the geodesic flows on $S^2$ should be  non-chaotic. Very surprisingly,
Donnay constructed in \cite{Don88} a smooth metric on $S^2$
whose geodesic flow is {\it nonuniformly hyperbolic} and
hence extremely chaotic.
For example, this geodesic flow  even has positive metric entropy.
See \cite{BW} for an analytic example.
Note that these chaotic metrics have negative curvature almost everywhere (except three
small caps) on $S^2$.

In \cite{KW94} Knieper and Weiss constructed the first
convex sphere whose geodesic flow has positive topological entropy.
In \cite{Don} Donnay proved that a small perturbation of the standard metric
of an ellipsoid  creates transverse homoclinic intersections.
Then in \cite{CBP} Contreras and Paternain
proved the $C^2$-denseness of  metrics on $S^2$  with positive topological entropy.
Knieper and Weiss \cite{KW02}
prove the $C^\infty$-denseness of convex  metrics
on $S^2$  with positive topological entropy.
They also observed that the convexity assumption can be removed
by using a result in \cite{HWZ}. The following theorem improves the characterizations
of the geodesic flow on a generic convex sphere:

\begin{thm}\label{main}
There is a residual subset $\cR^r\subset \cG^r_+$, such that for each $g\in \cR^r$,
the geodesic flow $\phi_t^g$ on the unit tangent bundle of $S^2$
satisfies the following properties:
\begin{enumerate}
\item every closed geodesic is either hyperbolic or irrationally elliptic \cite{Abr,Ano};

\item  the stable and unstable manifolds of two hyperbolic closed geodesics admit some
transverse  intersections whenever they intersect.

\item every hyperbolic closed geodesic admits some
transverse homoclinic intersections.
\end{enumerate}
\end{thm}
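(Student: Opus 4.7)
I would construct $\cR^r$ as a countable intersection of open dense subsets of $\cG^r_+$, one for each upper length bound $N\in\mathbb{N}$ and each of the three conclusions; openness of every $N$-truncated condition is automatic, so only $C^r$-density matters. The workhorse throughout is that a closed geodesic $\gamma$ of length $\le N$ induces a local area-preserving Poincaré return map $P_g$ on a $2$-disk transverse to $\gamma$, and bump perturbations of the metric supported on a short arc of $\gamma$ allow one to prescribe the $k$-jet of $P_g$ along $\gamma$ (Klingenberg--Takens).

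\textbf{Parts (1) and (2).} Part (1) is the classical bumpy-metric theorem for geodesic flows of Abraham and Anosov: density of nondegenerate spectra on any finite collection of closed geodesics follows from iterated Klingenberg--Takens perturbations. Part (2) follows from the same machinery applied away from closed orbits: given a nontransverse intersection $x\in W^s(\gamma_1)\cap W^u(\gamma_2)$ of hyperbolic closed geodesics of length $\le N$, a $C^r$-small metric bump in a flow box around a short geodesic segment through $x$, chosen disjoint from every hyperbolic closed geodesic of length $\le N$, tilts one invariant manifold at $x$ via Jacobi-field control and produces transversality while preserving (1).

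\textbf{Part (3).} Assume $g$ satisfies (1) and (2), fix a hyperbolic closed geodesic $\gamma$ of length $\le N$ with Poincaré section $\Sigma$ and hyperbolic fixed point $p$ of $P_g$, and suppose no transverse homoclinic exists at $p$. If intersections are nonempty but tangential, break a tangency as in Part (2). Otherwise $W^u(p)\cap W^s(p)=\{p\}$ and chosen branches of $W^u(p)$ and $W^s(p)$ co-bound an open invariant topological disk $U\subset\Sigma$; the area-preserving map $P_g|_U$ extends to the prime-end compactification $\overline{U}^{\,\text{pe}}$ with a well-defined prime-end rotation number $\rho\in\mathbb{R}/\mathbb{Z}$. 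By Brouwer--Franks there is an interior periodic orbit $q\subset U$ of $P_g$, hyperbolic or irrationally elliptic by Part (1). In the hyperbolic subcase, the invariant manifolds of $q$ must themselves eventually meet $\partial U\subset \overline{W^u(p)}\cup\overline{W^s(p)}$, producing heteroclinic points that iterate to homoclinics of $p$. In the irrationally elliptic subcase, Moser's twist theorem supplies a nonlinearly stable KAM circle around $q$; together with area preservation of $P_g|_U$ and the prime-end dichotomy (Cartwright--Littlewood for rational $\rho$, aperiodicity of boundary recurrence for irrational $\rho$), this forces either a perturbable accessible periodic prime end on $\partial U$ or a direct contradiction with the infinite length of $W^u(p)\cap U$. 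In every subcase one extracts a $C^r$-small metric perturbation creating a transverse homoclinic.

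\textbf{Principal obstacle.} Metric perturbations form a strictly constrained subclass of area-preserving surface-map perturbations, so Pixton's theorem for area-preserving surface diffeomorphisms does not apply off the shelf. The technical heart will be verifying that every perturbation invoked above --- tangency-breaking, relocating a periodic prime end, destroying a KAM barrier --- is realizable by a $C^r$-small metric bump supported in a tubular neighborhood of a single short geodesic arc disjoint from all closed geodesics of length $\le N$ already under control. Carrying out this Klingenberg--Takens program on $S^2$ while simultaneously maintaining (1) and (2) is the principal technical load.
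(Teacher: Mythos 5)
Your overall skeleton (countable intersection over a length bound $N$, bumpy metrics for part (1), Donnay-style tangency-breaking for part (2)) matches the paper's Sections~2--3. For part (3), however, you take a genuinely different route that has a structural gap: a small transverse $2$-disk $\Sigma$ cannot support the topological argument you invoke. If $W^s(p)\cap W^u(p)=\{p\}$, the branches of the invariant manifolds leave $\Sigma$ and travel through the three-manifold $M_g$; there is no invariant topological disk $U\subset\Sigma$ co-bounded by them, since the local quadrant adjacent to $p$ is not $P_g$-invariant. The paper circumvents this by working on a \emph{global} surface of section, the Birkhoff annulus $A_g$ over the shortest simple closed geodesic $\gamma_g$, so the full branches are honest curves inside $A_g$ and one can exploit the annulus topology: close the gates of a stable and an unstable branch at $(x,v)$, obtaining two simple closed curves in $A_g$ meeting transversely at $(x,v)$; the algebraic intersection number of simple closed curves on an annulus is zero, forcing a second intersection point, which is the desired homoclinic.

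You also omit the preparatory step that makes this work without any further perturbation, which is precisely how the paper dodges the ``principal obstacle'' you flag. Proposition~\ref{dense} perturbs the metric so that every elliptic periodic point of $F_g$ has Diophantine rotation number, hence is nonlinearly stable by Herman's Last Geometric Theorem. That nonlinear stability, together with the transverse-intersection property from part (2), is exactly the hypothesis under which Mather's prime-end theorem \cite{Mat81} gives that all four branches of any hyperbolic fixed point of $F_g^{2n}$ share the same closure; the intersection-number argument above then produces homoclinics with no additional perturbation, so one never has to realize a tangency-breaking or KAM-destroying move by a metric bump inside the putative disk $U$. Your Brouwer--Franks/case-analysis scheme would, by contrast, require exactly such perturbations, and you correctly note you cannot justify them. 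Finally, the shortest geodesic $\gamma_g$ is the \emph{boundary} of $A_g$, not an interior fixed point of $F_g$; the paper handles it separately (Lemma~\ref{miss}) by switching to a second Birkhoff annulus over a Lyusternik--Shnirel'man geodesic $\eta_g$, a case your local-section framework does not anticipate at all.
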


The last item in the above theorem
is about the existence of homoclinic intersections for
{\it all} hyperbolic closed geodesics. This is one of the two properties
suggested by Poincar\'e \cite{Poin}:
a generic map $f\in\mathrm{Diff}^r_\mu(M)$ may satisfy the following:
\begin{enumerate}
\item[(P1)]   (hyperbolic) periodic points are  \emph{dense} in the space $M$;

\item[(P2)] for every hyperbolic periodic point $p$,
\begin{enumerate}
\item[(P2a)] $W^s(p)\cap W^u(p)\backslash \{p\}\neq\emptyset$ (weak version);

\item[(P2b)] $W^s(p)\cap W^u(p)$ is  \emph{dense} in $W^s(p)\cup W^u(p)$ (strong version).
\end{enumerate}
\end{enumerate}
These properties are closely related to the
\emph{Closing  problem} and  \emph{Connecting problem},
and have been one of the main motivations
for the recent development in dynamical systems.
Both parts have been proved when $r=1$ (see \cite{Pu,PuRo} and \cite{Tak,Xia96}).

There are some partial results when $r>1$, and most of them are about
{\it area-preserving diffeomorphisms} of closed surfaces.
Pixton \cite{Pix}, extending a result of
Robinson  \cite{Rob73},
proved (P2a) when $M=S^2$.
That is, for a $C^r$ generic area-preserving diffeomorphism on $S^2$,
every hyperbolic periodic point $p$ admits some homoclinic intersections.
Oliveira showed in \cite{Oli1} the generic existence of homoclinic  intersections of
area-preserving diffeomorphisms on $\mathbb{T}^2$.
There are some partial results on closed surfaces other than $S^2$ or $\bT^2$,
see \cite{Oli2} and \cite{Xia2} for more details.

{\it Closing and connecting problems for geodesic flows} were raised by Pugh
and Robinson in \cite{PuRo}.
The main difficulty, as described in \cite[\S 10]{PuRo},
 is that one cannot perturb the geodesic flow directly:
any perturbation of the geodesic flow has to be done through
the deformation of the metric on the manifold;
but a local deformation of the metric on the manifold
changes the dynamics of all geodesics passing
through that region. Therefore, the dynamical
effect of the perturbation of the geodesic flow is unavoidably large.
It turns out that the closing and connecting problems for geodesic flows
are still open even in the $C^1$ category (that is, among $C^2$ metrics),
50 years after Pugh's proof of  $C^1$ Closing Lemma in 1967.
Similar difficulty appears in the study of
generic properties of dynamical billiards.

Recently there is some progress in this direction.
In \cite{XZ,Z}, we proved that (P2a) holds for generic convex billiards.
In \cite{Irie} Irie proved that generically,
closed geodesics are dense on any surface\footnote{Note that there is a subtle
difference between (P1) and Irie's result, since the
phase space  for geodesic flow is the tangent space.}.
Our main aim in this paper is to show  that (P2a) holds for geodesic flows
induced by  convex metrics on $S^2$, see Theorem \ref{main}.

\subsection*{Summary of the proof} Given a convex sphere $(S^2,g)$,
the geodesic flow $\phi_t$ on $TS^2$ preserves the unit tangent bundle
and hence is a 3D flow.
Generically, each closed geodesic is either elliptic or hyperbolic (see \S \ref{sec-bum}).
We make a perturbation for each elliptic closed geodesic  such that
the rotation number of that geodesic is Diophantine
(see Proposition \ref{per-tr} and Remark \ref{rem-dio}).
To this end, we need a precise control of the effect of
of the perturbations. So we introduce the Jacobi fields
and  formulate the tangent map of $\phi_t$ in terms of the Jacobi fields (see \S \ref{sec-jac}).
Then we apply a theorem of Herman
to get the nonlinear stability of the elliptic closed geodesics  (see \S \ref{sec-her}).
After we are done with the elliptic geodesics, we move on to deal with the hyperbolic ones.
%There are stable and unstable manifolds for each hyperbolic closed geodesic.
We apply a theorem of Donnay (\S \ref{sec-tra}) to destroy the non-transverse intersections
of the  stable and unstable manifolds (see Proposition \ref{opendense1}).
For the geodesic flow on a convex sphere,
we use the fact  that there exists a simple closed geodesic $\gamma_g$
(see \S \ref{sec-shortest}),
and the 3D geodesic flow $\phi_t$ can be reduced to
a 2D map $F_g$ on an annulus $A_g=\gamma_g\times (0,\pi)$ (see \S \ref{sec-bir}).
Then we   apply a theorem of Mather to deduce that
all branches of the stable and unstable manifolds are recurrent.
Finally, we prove the existence of homoclinic intersections
by counting the intersection number of two
simple closed curves on the annulus $A_g$ (see Lemma \ref{PnF}).

\begin{remark}\label{convexuse}
The convexity assumption is not essential in the proof of
Theorem \ref{KS}, which states that Kupka--Smale type properties hold
for the geodesic flow on a generic convex sphere.
Comparing with Poincar\'e's problems (P1) and (P2), we can see
a subtle difference: (P1) is about the existence/denseness of periodic points,
(P2) is about the existence/denseness of homoclinic intersections,
while the Kupka--Smale properties give {\it no} implication on the existence at all.
For example, a map $f$ satisfies  the Kupka--Smale properties
as long as it has no periodic point.
The key property for us to prove the existence of homoclinic intersections
for the geodesic flow on a generic convex sphere is a {\it dimension reduction}
from the 3D geodesic flow to a 2D map on the Birkhoff section--an annulus,
where the convexity assumption is essentially used
(see  \S \ref{sec-shortest} and  \S \ref{sec-bir}).
It is an open question whether there exists a metric $g$ on $S^2$
such that the geodesic flow induced by $g$ does not admit any Birkhoff section,
see \cite{Ban2}.
\end{remark}

\section{Preliminaries}\label{sec-pre}

Let $M$ be a closed 3D manifold, $V$ be a nonsingular vector field on $M$,
that is, $V(x)\neq 0$ for any $x\in M$, and $\phi_t$ be the flow induced by $V$.
A point $x$ is said to be {\it periodic}, if the orbit through $x$ is closed: $\phi_t(x)=x$
for some time $t>0$. The period of $x$ is the minimal time $t>0$ such that $\phi_t(x)=x$.
For each closed orbit $\gamma$ one can draw a local cross-section $\Sigma$
through a point $x\in\gamma$ and define the first return map
$P=P_{\gamma,\Sigma}$, the {\it Poincare map},
from (a smaller subset of)
$\Sigma$ to itself under $\phi_t$.
Note that $P(x)=x$, and the linearization $D_xP$
acts linearly on $T_x\Sigma$.
Let $\text{Tr}(\gamma)$ be the  {\it trace} of the linear action $D_xP$.
Note that $\text{Tr}(\gamma)$ is independent of the choices of $\Sigma$
and of the choices of $x\in\gamma$.
More generally, let  $\gamma^k$ be the closed orbit
that repeats itself $k$ times.
One can study the $k$-th return map $P^k$ and define
$\text{Tr}(\gamma^k)$ to be the trace of the action $D_xP^k$.

Assume that $P$ preserves an area form on $\Sigma$.
Then the closed orbit $\gamma$ is said to be {\it  degenerate}, if $\text{Tr}(\gamma)=2$;
to be {\it  nondegenerate} if $\text{Tr}(\gamma)\neq2$. Note that each nondegenerate
closed orbit is persistent under small perturbations of the vector field $V$.
Moreover, the closed orbit $\gamma$ is said to be {\it  hyperbolic}
if $|\text{Tr}(\gamma)|>2$; to be {\it  elliptic} if $|\text{Tr}(\gamma)|<2$
and to be  {\it parabolic} if $|\text{Tr}(\gamma)|=2$.
For an elliptic closed orbit $\gamma$,
the action $D_{x}P$ is conjugate to a rotation matrix
$\ds R_\rho=\begin{bmatrix}\cos2\pi\rho & -\sin2\pi\rho\\
\sin2\pi\rho & \cos2\pi\rho\end{bmatrix}$ (for some $0<\rho<1$).
In this case, the number $\rho$ is also called the {\it  rotation number} of
the elliptic closed orbit $\gamma$.
Then a closed orbit $\gamma$ is said to be {\it irrationally elliptic}
if the rotation number of $\gamma$ is irrational.

\subsection{Geodesic flow}\label{sec-geo}
Let $X$ be a closed surface endowed with a smooth Riemannian metric $g$.
For a point $x\in X$ and a vector $v\in T_xX$, let $\gamma_{v}(t)=\exp_{x}(tv)$
be the geodesic starting at $x$ with initial velocity $v$. This induces a smooth map
on the tangent bundle
$\phi_t:TX\to TX, (x,v)\mapsto (\gamma_{v}(t),\dot \gamma_{v}(t))$, which is the
so-called  geodesic flow.
Note that $|\dot \gamma_{v}(t)|\equiv |v|$ for all $t$. 
So we can restrict the geodesic flow $\phi_t$
to the unit tangent bundle $M_g:=\{(x,v)\in TX:g_x(v,v)=1\}$.

Note that the manifold $M_g$ changes if one perturbs the metric $g$ on $X$.
To avoid this problem, we may consider the abstract sphere bundle
$M=\big(TX\backslash\{0_X\}\big)\slash \bR_+$,
where $0_X:X\to TX$ is the zero section and $\bR_+=(0,\infty)$.
Note that there is a canonical isomorphism $i_g:M_g\to M$,
and one can study the abstract geodesic flow $i_g\circ\phi_t\circ i_g^{-1}$ on $M$.
We will not distinguish these two settings.

Let $TTX$ be the double tangent bundle of $X$.
Note that the fiber $T_{(x,v)}(TX)$
can be identified with $T_xX\times T_xX$ via the map
$\xi\mapsto (d\pi(\xi),C(\xi))$, where $\pi:TX\to X$ is the natural projection
and $C:T(TX)\to TX$ is the connection map induced by the Levi-Civita connection
related to the Riemannian metric $g$.
Under this isomorphism, $T_{(x,v)}M_g$
is mapped  to $T_xX\times v^{\perp}$, where $v^{\perp}\subset T_xX$ is the subspace
orthogonal to $v$.
Moreover, the vector field $G:M_g\to TM_g$, $(x,v)\mapsto(v,0)$
generates the geodesic flow $\phi_t$ on $M_g$.

\subsection{Bumpy Metric Theorem}\label{sec-bum}
Let $\phi_t$ be the geodesic flow on
the unit tangent bundle of the closed surface $(X,g)$.
A periodic orbit of the geodesic flow $\phi_t$ corresponds
to a closed geodesic $\gamma$ on $X$, and the minimal period of
the orbit, say $T$, corresponds the prime length of the closed geodesic $\gamma$.
Note that each closed geodesic corresponds to two closed orbits
of the geodesic flow: one moves forward, and the other one moves backward.
Clearly these two have the same dynamical properties.

The Riemannian metric $g$ on $X$ is said to be {\it bumpy},
if every closed geodesic, viewed as a periodic orbit of the geodesic flow on the unit
tangent bundle,
is either hyperbolic or irrationally elliptic (that is,
the rotation number $\rho$ is irrational if  $\gamma$ is elliptic).
The following theorem was formulated by Abraham in
\cite{Abr} and  proved by Anosov in \cite{Ano}.

\noindent{\bf Bumpy Metric Theorem.}
{\it Let  $\cG^r$ be the space of $C^r$ Riemannian metrics on $X$.
Then the set of bumpy metrics on $X$ is residual in $\cG^r$.}

\subsection{Jacobi fields}\label{sec-jac}
One of the main tools in the study of geodesic flows
(especially on surfaces) are Jacobi fields,
which represent infinitesimal variations along a geodesic.
More precisely, let $\gamma:\bR\to X$ be a geodesic with unit speed,
and $\gamma_s:\bR\to X$ be a smooth family of geodesics with $\gamma_0=\gamma$.
Then $\ds J(t)=\frac{d}{ds}\Big|_{s=0}\gamma_s$ defines a vector field
along $\gamma$, a so-called Jacobi field.
A Jacobi field $J(t)$ along a geodesic $\gamma$ satisfies the Jacobi equation:
$\ddot J+R(J,\dot\gamma)\dot\gamma=0$, where $R$ is the curvature tensor.
In fact, the evolution of Jacobi fields with time is governed
by the tangent  of the geodesic flow.
That is, let $\gamma(t)$ be the geodesic with initial position $(x,v)$,
and $J(t)$ be the Jacobi field along $\gamma$ with the initial condition
$(J(0),\dot J(0))=(J_0,\dot J_0)$. Under the identification
$T_{(x,v)}TX$ with $T_xX\times T_xX$, we have
$D_{(x,v)}\phi_t(J_0,\dot J_0)=(J(t),\dot J(t))$, see \cite{Pat}.

Note that if $J$ and $\dot J$
are orthogonal to $\dot\gamma$ at some time $t=t_0$, then they
are orthogonal to $\dot\gamma$ for all times $t$.
Such Jacobi fields are
called {\it orthogonal}.
For orthogonal Jacobi fields, the Jacobi equation can be rewritten as
$\ddot{J}(t)+K(t)\cdot J(t)=0$, where $K(t)$ is the Gaussian curvature at $\gamma(t)$.
Moreover, let $N$ be the normal vector field along $\gamma$,
and let $J=f(t)\cdot N$. Then we have a scalar differential equation $\ddot{f}+K\cdot f=0$.

The orthogonal Jacobi fields lie in  a 2D subbundle $\cJ\subset TM_g$,
where $\cJ_{(x,v)}=v^{\perp}\times v^{\perp}$.
In particular, for a  closed geodesic $\gamma$ with minimal period $T$,
the tangent map $D_{(x,v)}\phi_T$ induces a linear action on the 2D space $\cJ_{(x,v)}$
of  orthogonal Jacobi fields, and this is exactly the linearized  Poincare map
$D_{(x,v)}F$ with respect to some transversal
$\Sigma$ with $T_{(x,v)}\Sigma=\cJ_{(x,v)}$.
More precisely,
let $J_1(t)=f_1(t)N_{\gamma(t)}$ be the Jacobi field
on $\gamma$ with initial condition $(f_1(0),f_1'(0))=(1,0)$.
Note that $J_1$ may not  be $T$-periodic,
although the geodesic $\gamma$ is.
Then $J_2(t)=f_2(t)N_{\gamma(t)}$ with
$\ds f_2(t):=f_1(t)\cdot\int_0^t\frac{ds}{f_1(s)^2}$ is the Jacobi field
on $\gamma$ with initial condition  $(f_2(0),f_2'(0))=(0,1)$.
Note that $f_2(t)$ is well defined at the times when
$f_1(t_0)=0$, since in this case we must have $f_1'(t_0)\neq 0$
and $\ds \lim_{t\to t_0}f_2(t)=-\frac{1}{f_1'(t_0)}$. Then
\begin{equation}\label{DT}
D_{(x,v)}F=D_{(x,v)}\phi_T|_{\cJ}
=\begin{bmatrix}f_1(T) & f_2(T) \\ f_1'(T) & f_2'(T)\end{bmatrix},
\end{equation}
\begin{equation}\label{trace}
\mathrm{Tr} (\gamma)=f_1(T)+f_2'(T)
=f_1(T)+\frac{1}{f_1(T)}+f_1'(T)\int_0^T\frac{ds}{f_1(s)^2}.
\end{equation}

\subsection{Transversal intersections}\label{sec-tra}

Donnay proved the following perturbation result in \cite{Don}.
\begin{pro}\label{don1}
Let $(X,g)$ be a closed surface, $\gamma$ and $\eta$ be two hyperbolic closed geodesics
such that $W^s(\gamma)$ and $W^u(\eta)$ admit a non-transverse
intersection. Then there is a $C^r$-small perturbation $\hat g$ of the metric $g$, such that
$W^s(\gamma, \hat g)$ and $W^u(\eta, \hat g)$ admit  a transverse intersection.
\end{pro}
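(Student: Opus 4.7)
The plan is to exploit the heteroclinic orbit realizing the non-transverse intersection and arrange a metric perturbation supported in a small tube around a middle segment of this orbit, chosen disjoint from both closed geodesics $\gamma$ and $\eta$. Let $x\in W^s(\gamma)\cap W^u(\eta)$ be the non-transverse intersection point, and let $\sigma(t)=\phi_t^g(x)$ be its orbit in $M_g$, so the projected curve in $X$ is a geodesic asymptotic to $\eta$ as $t\to-\infty$ and to $\gamma$ as $t\to+\infty$. Since $x$ lies on neither $\gamma$ nor $\eta$, the projection of $\sigma$ is not contained in $\gamma\cup\eta$, so I can pick a compact sub-segment $\sigma([t_1,t_2])$ whose projection is disjoint from $\gamma\cup\eta$ and is not revisited by $\sigma$ in forward time. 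I fix a thin tubular neighborhood $U\subset X$ of this projected segment, still disjoint from $\gamma\cup\eta$, and restrict attention to $C^r$-small perturbations $\hat g$ supported in $U$. Any such $\hat g$ leaves $\gamma$ and $\eta$ unchanged as closed geodesics, so we may take $\hat\gamma=\gamma$ and $\hat\eta=\eta$.

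Next I would fix a smooth cross-section $\Sigma\subset M_g$ transverse to the flow at a point $p:=\sigma(t_3)$ with $t_3>t_2$ chosen so that the forward $g$-orbit of $p$ does not re-enter $U$. Both $W^s(\gamma)\cap\Sigma$ and $W^u(\eta)\cap\Sigma$ are $C^r$ smooth curves through $p$, and the non-transversality hypothesis is equivalent to tangency of these two curves inside $\Sigma$ (both two-planes contain the flow direction, so they are transverse iff their intersections with $T_p\Sigma$ are distinct lines). Because $U$ lies entirely in the past of $p$ and is avoided in forward time, $W^s(\gamma)\cap\Sigma$ near $p$ is the same curve for $g$ and $\hat g$. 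The task therefore reduces to producing a perturbation that rotates the tangent line of $W^u(\hat\eta)\cap\Sigma$ at (the continuation of) $p$.

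The quantitative step uses Jacobi fields along $\sigma$. In Fermi coordinates the normal component of a geodesic variation satisfies $\ddot J+K(\sigma(t))\,J=0$, and the direction of $W^u(\eta)$ at $\sigma(t)$ is spanned by the unique (up to scale) Jacobi field $J^u$ bounded as $t\to-\infty$. I take a conformal perturbation $\hat g=e^{2u}g$ with $u$ small, $C^r$, and supported in $U$; to leading order this changes the Gaussian curvature along $\sigma$ by the function $h(t)=-(\Delta_g u)(\sigma(t))$, supported in $[t_1,t_2]$. A variation-of-parameters computation shows that the first-order change in $(J^u(t_3),\dot J^u(t_3))$ is a linear functional of $h$ whose integral kernel is quadratic in $J^u$, hence not identically zero because $J^u$ does not vanish identically on $[t_1,t_2]$. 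Choosing $u$ of the appropriate sign in a one-parameter family rotates the tangent line of $W^u(\hat\eta)\cap\Sigma$ with nonzero first-order rate, while $W^s(\gamma)\cap\Sigma$ stays fixed, producing a transverse intersection for every sufficiently small nonzero parameter.

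The main obstacle is ensuring that one actually has enough freedom in $u$ to realize the desired curvature profile $h$ on $\sigma([t_1,t_2])$, and then passing from an infinitesimal tilt to a genuine transverse intersection in the perturbed flow. For the first, the restriction map $u\mapsto -(\Delta_g u)|_\sigma$ from conformal factors supported in a thin tube $U$ to $C^r$ functions on $\sigma([t_1,t_2])$ has dense image in the relevant $C^r$ space, so essentially any small target $h$ can be realized up to controlled error. For the second, $W^s(\gamma)$ and $W^u(\eta)$ depend $C^r$-smoothly on the metric by persistence of hyperbolic invariant manifolds, so the nonzero first derivative computed above, combined with the implicit function theorem, yields a transverse intersection of $W^s(\hat\gamma)$ and $W^u(\hat\eta)$ for every sufficiently small perturbation in the chosen direction.
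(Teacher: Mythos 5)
Your strategy is essentially Donnay's argument that the paper invokes: localize the perturbation in a tube $U$ around a segment of the connecting orbit chosen away from $\gamma\cup\eta$, note that $\gamma$, $\eta$ and (locally) $W^s(\gamma)\cap\Sigma$ are untouched, and use a variation-of-parameters computation to show that a curvature perturbation $h$ along the segment rotates the unstable line by an amount proportional to $\int h(s)J^u(s)^2\,ds$. That part is sound. The genuine gap is in the concluding step. A conformal perturbation $e^{2u}g$ supported in $U$ does not in general preserve the connecting geodesic: a $g$-geodesic remains an $e^{2u}g$-geodesic only if the normal derivative of $u$ vanishes along it. Once the heteroclinic orbit through $x$ is destroyed, the point $p$ need no longer lie on $W^u(\hat\eta)$, and inside $\Sigma$ the curve $W^u(\hat\eta)\cap\Sigma$ is both translated and rotated relative to the fixed curve $W^s(\gamma)\cap\Sigma$. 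Two tangent curves can be separated outright by such a perturbation (compare $y=0$ with $y=x^2+\epsilon(ax+b)$, $b>0$, $\epsilon>0$), so a nonzero first-order rotation does not by itself produce any intersection, let alone a transverse one; and the implicit function theorem cannot be used to continue the unperturbed intersection precisely because that intersection is degenerate (tangential).

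The repair is the point of Donnay's construction: choose the perturbation so that the connecting geodesic survives, i.e.\ so that the metric is unchanged to first order in the normal direction along the segment --- for instance $\hat g_{11}=g_{11}-k(t)b(s)s^2$ in Fermi coordinates as in the proof of Proposition \ref{per-tr}, or a conformal factor $u=\chi(t)\psi(s)$ with $\psi(0)=\psi'(0)=0$, $\psi''(0)\neq0$. Then $\sigma$ remains a heteroclinic orbit for $\hat g$, the intersection point $p$ stays fixed, $W^s(\gamma)\cap\Sigma$ is unchanged near $p$, and your Jacobi-field computation shows the tangent line of $W^u(\hat\eta)\cap\Sigma$ at $p$ turns at a nonzero rate, which gives transversality at $p$ directly. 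For this you must also arrange that $\sigma$ meets $\pi^{-1}(U)$ only during the chosen time window: a backward revisit is just as harmful as a forward one, since it would again detach $\sigma(t_1)$ from $W^u(\hat\eta)$ and from the unperturbed unstable direction. You exclude only forward revisits; the full condition is obtained by taking the segment among the simple points of the (compact, finitely self-intersecting) portion of the connecting geodesic lying away from $\gamma\cup\eta$ --- the ``isolated location'' the paper refers to.
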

In \cite{Don} the result is stated under a stronger
assumption that two components with non-transverse
intersection coincide.
This coincidence assumption is used only in \cite[Lemma 3.1]{Don} to find a point
$z=(x,v)\in W^s(\gamma)\cap W^u(\eta)$ and
an open neighborhood $U\subset X$ of $x$
such that the geodesic $\exp_x(tv)$ passes through $U$ only once.
Then Donnay made a small perturbation of the metric inside $U$ (see \cite[Lemma 3.2]{Don}).
To apply Donnay's result in our case, it suffices to show that,
along the orbit of
a non-transverse intersection $z=(x,v)\in W^s(\gamma)\cap W^u(\eta)$,
one can always find a moment $x_0=\exp_x(t_0 v)$ and a small neighborhood
$U$ of $x_0$ such that the geodesic $\exp_x(tv)$ passes through $U$ only once.

\begin{proof}
Let $\gamma$ and $ \eta$ be two hyperbolic closed geodesics on $X$,
$z=(x,v)\in W^s(\gamma)\cap W^u(\eta)$ be a non-transverse intersection,
and $c(t)=\exp_x(t v)$ be the geodesic
passing through $x$ in the direction of $v\in T_xX$.
Let $B(\gamma,\delta)$ be the $\delta$-tubular neighborhood along $\gamma$,
and $B(\eta,\delta)$ along $\eta$. Reducing $\delta$ if necessary, we assume
$B(\gamma,2\delta)\cup B(\eta,2\delta)$ does not contain the whole geodesic $c(t)$,
$t\in\mathbb{R}$.
Note that $c(t)$ will approach $\gamma$ as $t\to+\infty$ and
approach $\eta$ as $t\to-\infty$. So there exists $R\ge 1$ such that
$c(t)\in B(\gamma,\delta)\cup B(\eta,\delta)$ whenever $|t|>R$.

The finite geodesic segment $c[-R,R]$ can have at most  finitely many self-intersections.
Therefore, one can pick a point $x_0=c(t_0)$ outside
 $B(\gamma,2\delta)\cup B(\eta,2\delta)$
which is not a point of self-intersection,
and a small neighborhood $U\subset B(x_0,\delta)$ such that
$c[-R,R]$ passes through $U$ only once. Then by our choice of $R$,
the whole geodesic $c(\bR)$ passes through $U$ exactly once.
\end{proof}

Note that a transverse intersection, once created, persists under small perturbations.

\subsection{Existence of closed geodesics on $S^2$}\label{sec-shortest}
It is well known that there are infinitely many closed geodesics on any closed surface.
For surfaces  $X\neq S^2$, this is trivial since $\pi_1(X)\neq 0$ and
the curves minimizing the length/energy among the set of closed curves $c:S^1\to X$
within a given nonzero homotopy class must be geodesic.
This {\it curve shortening process}  does not work on $X= S^2$,
since any closed curve can be deformed to a point curve.
Instead of examining the closed curves one by one, we consider
the free loop space $\Lambda S^2$ on $S^2$ (or more generally,
the 1-cycle space, see \cite{CC}).
We know that $\pi_1(\Lambda S^2)\simeq\pi_2(S^2)\neq 0$.
Let $\eta$ be a closed curve on $\Lambda S^2$ such that
$[\eta]\neq 0\in \pi_1(\Lambda S^2)$.
Note that $\eta(t)\in\Lambda S^2$
is a closed curve on $S^2$  for each $t\in S^1$.
Let $\ds l(g)=\min_{[\eta]\neq 0}\max_{t\in S^1}|\eta(t)|$.
This function $l$ is also used in the proof of Proposition \ref{assign}.
Then  there exists a nontrivial closed geodesic $\gamma_g$ on $S^2$ of length $l(g)$.
This is the {\it minimax argument} introduced by Birkhoff \cite{Bir17}.
Moreover, if we assume that the sphere $(S^2,g)$ is convex, then
\begin{itemize}
\item $l(g)$ is the minimum of the lengths of closed geodesics on $S^2$,
since each closed geodesic $\gamma$ can be embedded to
a loop $\eta:S^1\to \Lambda S^2$ with $\eta(0)=\gamma$ and $|\eta(t)|\le |\gamma|$
for all $t$;

\item any closed geodesic of length $l(g)$ is simple, that is, $\gamma$
has no self-intersection on $S^2$. For example, if $\gamma$
is of figure eight on $S^2$, one can first shift $\gamma$ along the normal direction
to get a nongeodesic but shorter figure eight curve, say  $\sigma$.
Then we can embed $\sigma$ to a path $\eta$ with $|\eta(t)|\le |\sigma|<|\gamma|$.
Therefore, $|\gamma|> l(g)$.
\end{itemize}
See \cite{CC} for more details.
Note that there may be (infinitely) many closed geodesics
with the same length, and there is no canonical way to
assign to each $g$ a closed geodesic $\gamma_g$.

\subsection{The Birkhoff section and annulus map}\label{sec-bir}
Now let's assume  the sphere $(S^2,g)$ is convex,
and $\gamma_g$ be a simple closed geodesic (see \S \ref{sec-shortest}).
Then $S^2\backslash \gamma_g$ consists of two parts, say $D_{\pm}$.
Let $A_g$ be the set of unit vectors  $(x,v)\in T_{\gamma_g}S^2$
pointing to the side of $D_+$.
For each $(\gamma_g(t),v)\in A_g$, let $\theta$ be the angle measured from
$\dot\gamma_g(t)$ to $v$.
Then we can identify the set $A_g$ with the open annulus $\gamma_g\times (0,\pi)$.
Birkhoff proved in \cite{Bir}
that the geodesic flow on $TS^2$ induces a diffeomorphism on the annulus $A_g$.
More precisely,  for any $(x,v)\in A_g$ with $0<\theta<\pi$, we have
$\phi_{t(x,v)}(x,v)\in A_g$ for some continuous positive function $t(x,v)$ on $A_g$.
Let $F_{g}:A_g\to A_g$
be the first return map of $\phi$ with respect to $A_g$.
\begin{pro}[\cite{Bir}]
Let $(S^2,g)$ be a convex sphere, $\gamma_g$ be a simple closed geodesic,
$A_g$ be the set of unit vectors along $\gamma_g$ pointing
to the same side of $S^2\backslash \gamma_g$.
Then the first return map $F_g$ of the geodesic flow on $A_g$ is
a smooth diffeomorphism, and it
preserves the 2-form $\omega=\sin\theta dt\wedge d\theta$ on $A_g$.
\end{pro}
The annulus $A_g$ is the so-called  {\it Birkhoff annulus} of the geodesic flow $\phi_t$,
and the map $F_g$ is the so-called
the {\it Birkhoff annulus map} with respect to $(g,\gamma_g)$.
This reduction from a 3D flow to a 2D map on $A_g$ is the main reason
that we put the convex assumption of the Riemannian metrics on $S^2$,
and is the key property that we can apply Mather's result \cite{Mat82} in the proof
of Lemma \ref{PnF}.

As we have mentioned at the end of Remark \ref{convexuse}, it is an open question
whether there  exists a metric on $S^2$ for which the induced geodesic flow
does not admit any Birkhoff section.

\section{Perturbations of closed geodesics}\label{sec-per-tr}

Let $(X,g)$ be a closed surface,
$M_g\subset TX$ be the unit tangent bundle and $\phi_t$ be the geodesic flow on $M_g$.
Given a closed geodesic $\gamma$ on $X$,
let $\Sigma\subset M_g$ be a cross-section of the geodesic flow at some point
$(x,v)=(\gamma(t_0),\dot\gamma(t_0))$,
then $F=F_{\gamma,\Sigma}$ be the Poincare map
of the geodesic flow on $\Sigma$,
and $\text{Tr}(\gamma)=\text{Tr}(D_{(x,v)}F)$
be the trace of the linearized action.
Note that $\text{Tr}(\gamma)$ is independent of the choices $x\in\gamma$
and of the choices of transversals $\Sigma$.
Moreover, $\text{Tr}(\gamma)=f_1(T)+f_2'(T)$, see Eq. \eqref{trace}.

\begin{pro}\label{per-tr}
Let $(X,g)$ be a closed surface and
$\gamma$ be a closed geodesic.
Then there exists a $C^r$ perturbation $\hat g$
of the metric $g$, such that $\gamma$ is still a closed geodesic
for the new metric $\hat g$ and the trace
$\mathrm{Tr}_{\hat g}(\gamma)\neq \mathrm{Tr}_g (\gamma)$.
\end{pro}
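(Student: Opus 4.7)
The plan is to perturb the metric in Fermi coordinates along $\gamma$ so that $\gamma$ remains a geodesic but the Gaussian curvature along it is altered, and then to show that the induced variation of $\mathrm{Tr}(\gamma)$ is nontrivial. After selecting a short embedded sub-arc of $\gamma$ whose tubular neighborhood meets $\gamma$ only in that sub-arc, write the metric there in Fermi coordinates as $g = m(t,u)^2\,dt^2 + du^2$, with $m(t,0)=1$ and $\partial_u m(t,0)=0$; the Gaussian curvature along $\gamma$ is then $K(t) = -\partial_u^2 m(t,0)$. Define the family $\hat g_\varepsilon = (m+\varepsilon\phi)^2\,dt^2 + du^2$ where $\phi(t,u) = \tfrac{1}{2}u^2\,\psi(t)\,\chi(u)$, with $\psi \ge 0$ a smooth bump supported in the chosen sub-arc and $\chi$ a smooth cutoff with $\chi(0)=1$, compactly supported in the normal direction; extend by zero outside. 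Because $\phi$ vanishes to second order in $u$, the perturbed $\hat m$ still satisfies $\hat m(t,0)=1$ and $\partial_u\hat m(t,0)=0$, so $\gamma$ remains a unit-speed geodesic of $\hat g_\varepsilon$; along $\gamma$ the curvature changes by $\delta K = -\varepsilon\psi + O(\varepsilon^2)$.

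Let $T(\varepsilon) := \mathrm{Tr}_{\hat g_\varepsilon}(\gamma)$, a real-analytic function of $\varepsilon$. Applying variation of parameters to the perturbed Jacobi equation $\ddot f + (K+\varepsilon\psi)f = 0$, and using \eqref{trace} with the notation of Section \ref{sec-jac}, yields
\begin{equation*}
T'(0) = -\int_0^T \bigl[\, f_2(T)\,f_1(s)^2 + (f_2'(T)-f_1(T))\,f_1(s)f_2(s) - f_1'(T)\,f_2(s)^2 \,\bigr]\,\psi(s)\,ds.
\end{equation*}
If the bracketed kernel $Q(s)$ is not identically zero on $[0,T]$, pick a sub-arc where $Q$ has a definite sign and a bump $\psi\ge 0$ supported there; this forces $T'(0)\ne 0$, so that $T(\varepsilon)\ne T(0)$ for all sufficiently small $\varepsilon\ne 0$.

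The main obstacle is the degenerate case $Q \equiv 0$, where the first-order variation vanishes for every admissible $\psi$. Since $f_1,f_2$ are linearly independent solutions of a second-order linear ODE, the smooth curve $s\mapsto(f_1(s),f_2(s))$ cannot lie in the zero set of any nonzero real binary quadratic form on an open interval, so all three coefficients of $Q$ must vanish: $f_2(T) = f_1'(T) = 0$ and $f_2'(T) = f_1(T)$. Combined with the Wronskian identity $f_1(T)f_2'(T) - f_1'(T)f_2(T) = 1$, this forces $f_1(T) = f_2'(T) = \pm 1$, so the linearized Poincar\'e map satisfies $D_x F = \pm I$ and $T(0) = \pm 2$. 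Writing $M(\varepsilon) = \pm I + \varepsilon M_1 + \varepsilon^2 M_2 + O(\varepsilon^3)$, the symplectic constraint $\det M(\varepsilon)\equiv 1$ together with $\mathrm{Tr}(M_1)=0$ (from $T'(0)=0$) yields $T''(0) = 2\,\mathrm{Tr}(M_2) = \mp 2\det(M_1)$. A direct computation of the entries of $M_1$ from the first-order Jacobi variation gives
\begin{equation*}
\det(M_1) = \Bigl(\int_0^T \psi\, f_1^2\,ds\Bigr)\Bigl(\int_0^T \psi\, f_2^2\,ds\Bigr) - \Bigl(\int_0^T \psi\, f_1 f_2\,ds\Bigr)^2,
\end{equation*}
which is strictly positive by the Cauchy--Schwarz inequality in $L^2(\psi\,ds)$, provided $\psi\ge 0$ is supported on an interval where $f_1$ and $f_2$ are not proportional. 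Linear independence of $f_1,f_2$ as solutions of a linear ODE, by uniqueness of initial value problems, precludes proportionality on any open sub-interval, so any short sub-arc suffices. Hence $T''(0)\ne 0$ and $T(\varepsilon)\ne T(0)$ for small $\varepsilon\ne 0$, completing the argument.
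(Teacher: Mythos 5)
Your argument is correct, and it takes a genuinely different route from the paper's at the decisive step. Both proofs use the same perturbation mechanism: a Fermi-coordinate modification of $g_{11}$ vanishing to second order in the normal variable, supported in a tubular neighborhood of a sub-arc of simple points, so that $\gamma$ stays geodesic and only the curvature $K(t)$ along $\gamma$ changes. The paper, however, works backwards from the Jacobi field: it prescribes $\hat f_1=f_1+h$ with $h$ supported near the end of the period, solves for the curvature $\hat K=-\hat f_1''/\hat f_1$, and reads the trace change off Eq.~\eqref{trace} as $f_1'(T)\int(\hat f_1^{-2}-f_1^{-2})\,ds$; this is explicit but forces a separate (and only sketched) two-step argument when $f_1(T)f_1'(T)=0$, requires $f_1\neq 0$ on the support of $h$, and costs two derivatives ($h$ must be $C^{r+2}$-small to make $\hat K$ $C^r$-small). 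You instead perturb $K$ directly by $-\varepsilon\psi$ and compute the first variation of the trace from the variation-of-parameters formula for the monodromy matrix; your kernel $Q(s)$ and the identity $T'(0)=-\int Q\psi$ check out. Your treatment of the degenerate case is the real added value: the observation that $Q\equiv 0$ forces $D_xF=\pm I$ (since the nonvanishing curve $(f_1,f_2)$ cannot live in the zero set of a nonzero binary quadratic form), and that then $T''(0)=\mp2\det(M_1)$ with $\det(M_1)>0$ by Cauchy--Schwarz, handles uniformly exactly the cases the paper waves at. Two cosmetic points: your sign convention slips between $\delta K=-\varepsilon\psi$ and the perturbed equation $\ddot f+(K+\varepsilon\psi)f=0$ (immaterial, since you only need $T'(0)\neq0$ or $T''(0)\neq0$), and when you ``pick a sub-arc where $Q$ has a definite sign'' you should note that it can be taken inside the (open, dense) set of simple points of $\gamma$, since the self-intersection parameters form a finite set.
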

Note that the above result is a weak version of Franks' Lemma for
perturbations of abstract diffeomorphisms. Franks' Lemma for geodesic flows
turns out to be very difficult to prove, and may not hold on its full generality, see \cite{Con,Vis}.
Proposition \ref{per-tr} is sufficient for our need in this paper.
\begin{proof}
Let $\gamma$ be a closed geodesic and $T>0$ be its prime period.
Note that $\gamma$ may have
finitely many self-intersections on $X$.  Such points are called the {\it multi-points}
of $\gamma$.
We fix a {\it simple}  point on $\gamma$, say
$x_0=\gamma(t_0)$, in the sense
that $\gamma(t)\neq x_0$ for any $t\in[0,T)\backslash \{t_0\}$.
Without loss of generality we assume $t_0=0$.
Pick $\epsilon>0$ small enough such that $\gamma|_{[T-2\epsilon,T]}$ consists
only of simple points of $\gamma$, and $U$ a small tubular neighborhood
of $\gamma(T-2\epsilon,T)$ such that $\gamma\cap U=\gamma|_{(T-2\epsilon,T)}$.
This set $U$ will be the support of our perturbation  of the metric $g$.

Let $J_1(t)=f_1(t)N_{\gamma(t)}$ be the Jacobi field
on $\gamma$ with initial condition $(f_1(0),f_1'(0))=(1,0)$.
Note that $(f_1(t),f_1'(t))\neq (0,0)$ for any $t$.
In the following we first assume $f_1(T)\neq 0$ and $f_1'(T)\neq 0$.
The cases that $f_1(T)f_1'(T)=0$ will be discussed at the end of the proof.

Let $h:[0,T]\to[0,1]$ be a $C^{r+2}$ small function satisfying the following conditions:
\begin{enumerate}
\item $h(t)=0$ for all $0\le t\le T-2\epsilon$ and for all $T-\epsilon\le t\le T$;

\item $h(t)> 0$  for all $T-2\epsilon< t< T-\epsilon$.
\end{enumerate}
Let $\hat f_1(t)=f_1(t)+h(t)$ and $\hat K(t)=- \frac{\hat f_1''(t)}{\hat f_1(t)}$ for each $0\le t\le T$.
It is easy to see that $\hat K$ is smooth, $T$-periodic and $C^r$-close to $K$.
Moreover, $\hat J(t)=\hat f(t)N_{\gamma(t)}$ will be a new Jacobi field
with initial condition $(\hat f_1(0),\hat f_1'(0))=(1,0)$
if $\hat K$ describes the curvature along $\gamma$ for some metric $\hat g$.
Assuming this for a moment,  we see that $\hat f_1(T)=f_1(T)$, $\hat f_1'(T)=f_1'(T)$,
\begin{align*}
\mathrm{Tr}_{\hat g} (\gamma)&=
\hat f_1(T)+\frac{1}{\hat f_1(T)}+\hat f_1'(T)\int_0^T\frac{ds}{\hat f_1(s)^2}\\
&=f_1(T)+\frac{1}{f_1(T)}+f_1'(T)\int_0^T\frac{ds}{(f_1(s)+h(s))^2},\\
\mathrm{Tr}_{\hat g} (\gamma)-\mathrm{Tr}_g (\gamma)
&=f_1'(T)\int_{T-2\epsilon}^{T-\epsilon}
\Big(\frac{1}{(f_1(s)+h(s))^2}-\frac{1}{f_1(s)^2}\Big)ds.
\end{align*}
Therefore, $\mathrm{Tr}_{\hat g} (\gamma)\neq \mathrm{Tr}_{g} (\gamma)$
and depends continuously on the function $h$.

To construct a metric  $\hat g$ such that $\hat K(t)$ is
the curvature along $\gamma(t)$,
we will use the Fermi coordinate
along $\gamma$, that is, $(t,s)\mapsto \exp_{\gamma(t)}(sN(t))$.
In this coordinate system, the metric tensor $g$ satisfies
\begin{enumerate}
\itemsep0.5em
\item $g_{11}(t,0)=1$, $g_{12}(t,s)=g_{21}(t,s)=0$ and $g_{22}(t,s)=1$;

\item $\ds \partial_s g_{11}(t,0)=0$, $\Gamma^i_{jk}(t,0)=0$ for all $i,j,k$.

\item $\ds \partial_s^2 \sqrt{g_{11}}(t,0)=-K(t)$, or equivalently, $\ds \partial_s^2 g_{11}(t,0)=-2K(t)$.
\end{enumerate}
Then the new metric tensor  $\hat g$ on $U$ is given by
\begin{enumerate}
\itemsep0.5em
\item $\hat g_{ij}(t,s)=0$ when $i\neq j$, $\hat g_{22}(t,s)=1$,

\item $\hat g_{11}(t,s)=g_{11}(t,s)-k(t)b(s)s^2$,
\end{enumerate}
where $k(t)=\hat K(t)-K(t)$, and $b$ be a smooth bump function with $b(0)=1$
and a uniform $C^r$-norm.
Then the new metric $\hat g$ is $C^r$ close to $g$ 
and is identical to $g$ on $X\backslash U$.
Note that the curve $(t,0)\mapsto \gamma(t)$ is still a closed geodesic
under $\hat g$ with unit speed, and
the new curvature at $\gamma(t)$ is
$-\frac{1}{2}\partial_s^2\hat g_{11}(t,0)=K(t)+k(t)=\hat K(t)$.

Lastly, if $f_1(T)=0$, then $f_1'(T)\neq 0$.  In this case
we use a two-step perturbation of the metric:
the first one makes  $\hat f_1(T)\neq 0$ (while keeping $\hat f_1'(T)\neq 0$),
and the second one (much smaller) changes the
trace $\mathrm{Tr}_g(\gamma)$ continuously.
The detail is omitted since the perturbations
are of the same type used above.
If $f_1'(T)=0$, then $f_1(T)\neq 0$ and we can employ the two-step process, too.
This completes the proof.
\end{proof}
\begin{remark}\label{rem-dio}
It follows from Proposition \ref{per-tr} that
\begin{itemize}
\item if $\gamma$ is degenerate, then after the perturbation, it is either hyperbolic
or elliptic;

\item if $\gamma$ is elliptic, then we can manipulate its rotation number
(irrational, Diophantine, etc).
\end{itemize}
\end{remark}

Recall that $\cG^r$ is the set of $C^r$-smooth Riemannian metrics on $S^2$,
and $\cG^r_+$ is the subset of convex Riemannian metrics on $S^2$.
 For each $g\in \cG^r_+$, let $l(g)$ be the length of the
shortest closed geodesic on the sphere $(S^2,g)$.
See Sect. \ref{sec-shortest} for more details.
\begin{pro}\label{assign}
There is an open and dense subset $\cU^r\subset \cG^r_+$
such that
\begin{enumerate}
\item for each $g\in\cU^r$, there is a unique closed geodesic of length $l(g)$,

\item the map $g\in\cU^r\mapsto\gamma_g\in C^r(S^1,S^2)$ varies smoothly.
\end{enumerate}
\end{pro}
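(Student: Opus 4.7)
The plan is to define $\cU^r$ as the set of metrics $g \in \cG^r_+$ for which the minimizer of length $l(g)$ among closed geodesics is unique and, moreover, non-degenerate as a periodic orbit of the geodesic flow. With this definition, the smooth dependence $g \mapsto \gamma_g$ is a direct consequence of the implicit function theorem: a non-degenerate closed geodesic is a transverse zero of the geodesic equation (equivalently, a non-degenerate fixed point of the associated Poincar\'e map), and therefore persists as a smooth function of the parameter $g$.

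For openness, fix $g \in \cU^r$ with shortest closed geodesic $\gamma_g$. Non-degeneracy of $\gamma_g$ is a $C^2$-open condition, hence persists under small perturbations. For uniqueness, I would set
\[
l_2(g') := \inf\{|\eta|_{g'} : \eta \text{ a closed geodesic of } g',\ \eta \neq \gamma_{g'}\}
\]
and show it is lower semi-continuous at $g$. If not, there exist $g_n \to g$ in $C^r$ and closed geodesics $\eta_n \neq \gamma_{g_n}$ of $g_n$ with $|\eta_n|_{g_n} \le l_2(g) - \delta$ for some $\delta > 0$. Parametrizing by arclength and using the geodesic ODE of $g_n$, an Arzel\`a--Ascoli / ODE-compactness bootstrap produces a subsequence converging in $C^r$ to a closed geodesic $\eta$ of $g$ with $|\eta|_g \le l_2(g) - \delta < l_2(g)$; by definition of $l_2(g)$ this forces $\eta = \gamma_g$. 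Then $\eta_n$ is $C^r$-close to $\gamma_g$, and local uniqueness of non-degenerate closed geodesics (IFT applied jointly in $g$ and $\gamma$) forces $\eta_n = \gamma_{g_n}$, contradicting the choice of $\eta_n$.

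For density, given $g \in \cG^r_+$ the Bumpy Metric Theorem lets me approximate $g$ by a bumpy $g_0 \in \cG^r_+$. All closed geodesics of $g_0$ are non-degenerate, so compactness of $M_{g_0}$ leaves only finitely many closed geodesics $\gamma_1, \dots, \gamma_k$ of length $\le l(g_0) + 1$, ordered so that $|\gamma_1| \le \cdots \le |\gamma_k|$. If $|\gamma_1| < |\gamma_2|$ we are done. Otherwise, since $\gamma_1$ is simple (being the shortest geodesic on a convex sphere) and $\gamma_1 \cap \gamma_j$ is finite for $j \ge 2$, I pick a simple point $p \in \gamma_1$ with $p \notin \gamma_2 \cup \cdots \cup \gamma_k$, and a small neighborhood $V$ of $p$ meeting $\gamma_1$ in a single arc and disjoint from $\gamma_2, \dots, \gamma_k$. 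Set $\hat g = (1 - \epsilon b) g_0$ for a bump $b$ supported in $V$ with $b(p) = 1$ and $\epsilon > 0$ small; then $\hat g \in \cG^r_+$ and $\hat g = g_0$ off $V$, so $\gamma_j$ ($j \ge 2$) remain closed geodesics of $\hat g$ with unchanged lengths, while the smooth continuation $\hat\gamma_1$ satisfies $|\hat\gamma_1|_{\hat g} \le |\gamma_1|_{\hat g} < |\gamma_1|_{g_0} = |\gamma_j|_{\hat g}$ for $j \ge 2$. A compactness argument as in the openness step (shrinking $\epsilon$ if necessary) shows no new short closed geodesics appear, so $\hat\gamma_1$ is the unique shortest closed geodesic of $\hat g$ and $\hat g \in \cU^r$.

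The principal obstacle in both parts is to control the possible appearance, under a $C^r$-small perturbation, of new short closed geodesics that could either match $l(g)$ (spoiling uniqueness) or drop below any uniform bound (spoiling the semi-continuity of $l_2$). This is where bumpiness enters crucially: non-degeneracy promotes the set of closed geodesics of bounded length to a discrete, hence finite, set whose elements persist by the implicit function theorem. The secondary technical point, that a local conformal perturbation strictly shortens $\gamma_1$ without affecting the other $\gamma_j$'s, is immediate from the support of $b$ being disjoint from $\gamma_2 \cup \cdots \cup \gamma_k$.
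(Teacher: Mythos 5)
Your proposal follows essentially the same strategy as the paper: Bumpy Metric Theorem, finiteness of the set of closed geodesics of length at most $l(g)+1$, a localized perturbation to make the minimizer unique, and persistence of nondegenerate closed geodesics to get openness and smooth dependence. Two differences are worth recording. First, your openness argument via lower semi-continuity of the second-shortest length $l_2$ is a clean, self-contained alternative to the paper's bookkeeping with the continuity points of $l$ (a residual set) intersected with the bumpy metrics; both ultimately rest on the same Arzel\`a--Ascoli compactness plus local uniqueness of nondegenerate geodesics. Second, your perturbation shortens $\gamma_1$ conformally, whereas the paper instead lengthens the competing geodesics $\gamma_2,\dots,\gamma_k$ while leaving $\gamma_1$ untouched; the paper's choice avoids the one genuinely shaky step in your argument, described next.

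The step that needs repair is the inequality $|\hat\gamma_1|_{\hat g}\le|\gamma_1|_{\hat g}$. This would be automatic if $\hat\gamma_1$ were a local \emph{minimizer} of $\hat g$-length near $\gamma_1$, but the shortest closed geodesic on a convex sphere is produced by Birkhoff's minimax and is a saddle-type critical point of the length functional, so the nearby critical point of $L_{\hat g}$ need not have smaller $L_{\hat g}$-value than the curve $\gamma_1$ itself. The conclusion you want is nevertheless true and follows from the first-variation (envelope) computation: writing $g_\epsilon=(1-\epsilon b)g_0$ and letting $\gamma_\epsilon$ be the continuation, one has
\begin{equation*}
\frac{d}{d\epsilon}\Big|_{\epsilon=0}L_{g_\epsilon}(\gamma_\epsilon)
=\frac{\partial}{\partial\epsilon}\Big|_{\epsilon=0}L_{g_\epsilon}(\gamma_1)
=-\frac{1}{2}\int_{\gamma_1}b\,ds<0,
\end{equation*}
the term $dL_{g_0}(\gamma_1)\cdot\dot\gamma_\epsilon$ vanishing because $\gamma_1$ is critical; hence $|\hat\gamma_1|_{\hat g}=l(g_0)-\tfrac{\epsilon}{2}\int_{\gamma_1}b\,ds+O(\epsilon^2)<l(g_0)=|\gamma_j|_{\hat g}$ for small $\epsilon$. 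With that substitution (or by switching to the paper's device of lengthening $\gamma_2,\dots,\gamma_k$ instead), your proof is complete. One last small point common to both arguments: when choosing the point $p$ (resp.\ the simple points $x_i$), one should note that iterated geodesics coincide with their underlying primitive ones as point sets, but iterates are strictly longer and so never compete for the length $l(g_0)$, which is all that matters.
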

Therefore, on an open and dense subset $\cU^r\subset \cG^r_+$,
there is a canonical choice of the simple closed geodesic $\gamma_g$,
and the induced map $F_g$ on $A_g=\gamma_g\times(0,\pi)$
also depends continuously on $g$.
\begin{proof}
Let $g_0\in\cG^r_+$ be a convex Riemannian metric on $S^2$.
Then $a:=\min K_0(x)>0$. Let $b:=\max K_0(x)$,
and $\cU\subset\cG^r_+$ be a small neighborhood of $g_0$ such that
for each $g\in\cU$, one has $a/2\le K_g(x)\le 2b$ for any $x\in S^2$.
Consider the function $l:g\in\cU\to l(g)$ (see \S \ref{sec-shortest}).
Note that there exists a constant $C\ge \pi$
such that $l(g)^2\le C\cdot \text{Area}(g_0)$ for any $g\in \cU$ (see \cite{Cro}).
For any $g_n\to g_0$ and for any closed geodesic $\gamma_n$
on the sphere $(S^2,g_n)$,
the limit of $\gamma_n$ (passing to a subsequence if necessary)
is a simple closed geodesic $\gamma$ whose length $|\gamma|\le \lim l(g_n)$. Therefore,
$l(g)\le \liminf l(g_n)$, and the function $l:g\in \cG^r_+\to l(g)$
is lower semi-continuous.
Let $\cR_l^r$ be the set of metrics of continuity of the function $l$,
which is a residual subset of $\cG^r_+$.

Let $\cR^r_b\subset \cG^r_+$ be the residual subset of bumpy metrics
such that each closed geodesic is either hyperbolic or irrationally elliptic.
Let $g\in\cR_l^r\cap \cR_b^r$.
Clearly there is only a finitely many closed geodesics of length less than $l(g)+1$.
Label them according to their lengths as $\gamma_1,\cdots,\gamma_k$,
$\gamma_{k+1},\cdots,\gamma_n$, where $|\gamma_i|=l(g)$ for each $1\le i\le k$,
and $|\gamma_j|\ge |\gamma_{k+1}|> l(g)$ for each $k<j\le n$.
For each $i=2,\dots,k$, we select a {\it simple} point $x_i$ on $\gamma_i$.
Here, a point $x$ is `simple' if the union $\bigcup_{1\le j\le n}\gamma_j$
 covers $x$ only once.
Then we perturb the metric tensor $g$ around each $x_i$
such that all $\gamma_i$, $2\le i\le k$ are longer nondegenerate geodesics for
the new metric $\hat g$:
$|\gamma_i|_{\hat g}\ge l(g)+\epsilon$
while $\gamma_{\hat g}=\gamma_1$ is unchanged.
Then there exists a very small neighborhood $\cV$ of $\hat g$,
such that for any $\tilde g\in \cV$, the continuation
$\gamma_{\tilde g}$ of $\gamma_1$ is the only
geodesic of shortest length (using the nondegeneracy condition of $\gamma_{\hat g}$),
and $\tilde g\in\cV\mapsto \gamma_{\tilde g}$ varies
continuously.
We complete the proof by letting $g$ vary in $\cR_l^r\cap \cR_b^r$.
\end{proof}

Let $A_g$ be the Birkhoff annulus of unit tangent vectors on $\gamma_g$ pointing
to the same side of $S^2\backslash \gamma_g$, and $F_{g}$ be
the Birkhoff annulus  map on $A_g$ with respect to the geodesic flow $\phi_t^g$,
see \S \ref{sec-bir}.
Let $n\ge 1$, ${P}_n(F_g)$ be the set of points in $A_g$ that is fixed by $F_g^n$.
Note that ${P}_n(F_g)$ is always a closed (may not be compact) subset of $A_g$, since
the return function $t(\cdot)$ is bounded and bounded away from zero.

Let $(x,v)\in {P}_n(F_g)$. Then $(x,v)$ is said to be {\it nondegenerate under $F^n_g$}
if $\text{Tr}(D_{(x,v)}F^n_g)\neq 2$.
Note that the minimal period $m$ of $(x,v)$ under $F_g$ may be
smaller than $n$. In this case,
we set $k=\frac{n}{m}$, let $\gamma$ be the closed geodesic with initial
condition $(x,v)$,
and $\gamma^k$ be the geodesic that repeats itself $k$ times.
Then the above nondegeneracy condition is equivalent to that
$\text{Tr}(\gamma^k)\neq 2$, where $k=\frac{n}{m}$.

Let $\cU^r\subset \cG^r_+$ be the open and dense subset given by
Proposition \ref{assign},
$\cU^r_n\subset \cU^r$ be the subset of Riemannian metrics
such that $\text{Tr}(\gamma_g^n)\neq 2$
and each periodic point in ${P}_n(F_g)$ is nondegenerate under $F^n_g$.
The following  Proposition \ref{finite}  can be viewed
as the reformulation of Bumpy Metric Theorem in terms of its Birkhoff annulus maps.
This formulation is the key
to prove the existence of homoclinic intersections in Sect. \ref{sec:homo}.
\begin{pro}\label{finite}
Let $n\ge 1$, and  $\cU^r_n$ be the subset given above.
Then the following statements hold:
\begin{enumerate}
\item for each $g\in\cU^r_n$, ${P}_n(F_g)$ is a finite subset,

\item the map $g\in\cU_n^r\mapsto {P}_n(F_g)$ varies continuously.
\end{enumerate}
\end{pro}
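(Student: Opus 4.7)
\textbf{Plan for Proposition \ref{finite}.}
Both assertions will follow from combining two facts: under the hypotheses defining $\cU^r_n$, every fixed point of $F_g^n$ in $A_g$ is isolated, and no sequence of such fixed points can accumulate at the boundary $\gamma_g\times\{0,\pi\}$ of the annulus. The plan is to deduce isolation from area-preservation together with the nondegeneracy assumption, and to rule out boundary accumulation using the extra assumption $\mathrm{Tr}(\gamma_g^n)\neq 2$.

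First, observe that $F_g^n$ preserves the area form $\omega=\sin\theta\,dt\wedge d\theta$, so $\det D_{(x,v)}F_g^n=1$. For an area-preserving linear map in dimension two, the trace equals $2$ if and only if $1$ is an eigenvalue; hence the nondegeneracy assumption $\mathrm{Tr}(D_{(x,v)}F_g^n)\neq 2$ at every $(x,v)\in P_n(F_g)$ is equivalent to the invertibility of $D_{(x,v)}F_g^n-I$. The implicit function theorem then furnishes, for each $(x,v)\in P_n(F_g)$, a neighborhood in which $(x,v)$ is the unique fixed point of $F_g^n$ and in which this fixed point varies smoothly with the metric. This yields both the isolation required for (1) and the continuous persistence of each individual fixed point required for (2).

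It remains to rule out accumulation of $P_n(F_g)$ at the boundary of $A_g$. Suppose for contradiction that a sequence $(x_k,v_k)\in P_n(F_g)$ converges to a boundary point with $v_\ast$ tangent to $\gamma_g$. The uniform bound $t(x,v)\le t_2$ forces the associated closed geodesics to have length at most $nt_2$, and by Arzela--Ascoli they subconverge to a multiple traversal $\gamma_g^m$ for some $m\le nt_2/l(g)$. Using Fermi coordinates along $\gamma_g$ and the Jacobi-field description of $D\phi^g_t$ recorded in \eqref{DT}, one can expand $F_g^n$ in a collar of the boundary and identify its ``linearization at the boundary'' (after passing to smooth coordinates such as $s=\sin\theta$ in which $\omega$ extends smoothly across the boundary) with the $n$-th iterate of the linearized Poincar\'e return map of $\gamma_g$, whose trace $\mathrm{Tr}(\gamma_g^n)$ differs from $2$ by assumption. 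Thus $1$ is not an eigenvalue of the boundary linearization, and a Taylor-expansion argument rules out an accumulating sequence of interior fixed points of $F_g^n$. Together with the isolation above, this forces $P_n(F_g)$ to be finite.

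For the continuity statement in (2), the local implicit-function persistence has to be supplemented by a uniformity argument showing that no new fixed points appear as $\tilde g$ varies near $g$ in $\cU^r_n$. Both nondegeneracy conditions defining $\cU^r_n$ are stable under small $C^r$ perturbations, so the boundary analysis above works uniformly on a neighborhood of $g$ and prevents fixed points from ``bifurcating in'' from the boundary. The principal technical obstacle is precisely this boundary analysis: $F_g$ does not extend as a smooth diffeomorphism across $\theta=0,\pi$ in the original coordinates, so the statement that ``$DF_g^n$ at the boundary coincides with the $n$-th iterate of the Poincar\'e map of $\gamma_g$'' requires either a prime-end compactification (as flagged in the paper's keywords) or an explicit asymptotic expansion of $F_g$ via the orthogonal Jacobi fields along $\gamma_g$, in order to legitimate the conclusion that the nondegeneracy of $\gamma_g^n$ prohibits interior fixed points from approaching the boundary.
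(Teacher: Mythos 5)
Your proposal is correct and takes essentially the same route as the paper's own proof. Both arguments split finiteness into two cases: (a) an accumulation point in the interior of $A_g$, ruled out by the pointwise nondegeneracy in $\cU^r_n$, and (b) an accumulation point on $\partial A_g$, ruled out by the hypothesis $\mathrm{Tr}(\gamma_g^n)\neq 2$. Your implicit-function-theorem argument for (a) is equivalent to the paper's, which extracts a unit eigenvector of $D_{(x_\ast,v_\ast)}F^n_g$ with eigenvalue $1$ from an accumulating sequence of fixed points; both boil down to: nondegeneracy of a fixed point of an area-preserving map implies local isolation. For (b), the paper simply asserts that the closed geodesics $\gamma_k$ converge to $\gamma_g^n$ and that $\mathrm{Tr}(\gamma_g^n)\neq 2$ then yields a contradiction, while you first allow the Arzel\`a--Ascoli limit to be $\gamma_g^m$ with $m\le nt_2/l(g)$, and then identify the boundary linearization of $F_g^n$ with that of $\gamma_g^n$ without reconciling $m$ with $n$ --- this is exactly where the delicate point lies, and both proofs leave the same amount to the reader here. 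You are, if anything, more candid than the paper in flagging that $F_g$ does not extend smoothly across $\theta=0,\pi$ and that some prime-end/asymptotic-expansion device is needed to justify the boundary case. The continuity argument in part (2) is identical in both: lower semi-continuity from persistence of nondegenerate fixed points, upper semi-continuity from the continuous dependence of $\gamma_g$ and $F_g$ on $g$ (Proposition \ref{assign}) together with the uniform boundary control, which you make a bit more explicit than the paper does.
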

\begin{proof}
(1). Let $g\in\cU^r_n$. Suppose on the contrary that there are infinitely many periodic
points in ${P}_n(F_g)$. Then we pick a sequence of points from ${P}_n(F_g)$,
say $\{(x_k,v_k):k\ge 1\}$, such
that all the points are mutually different and
they converge to some point $(x_\ast,v_\ast)$.
\begin{enumerate}
\item[1a).] $v_\ast\neq \pm\dot\gamma(x_\ast)$: then $(x_\ast,v_\ast)\in {P}_n(F_g)$.
Here we use the fact that the return time function $t(x,v)$ is bounded.
This point $(x_\ast,v_\ast)$  must be degenerate under $F^n_g$,
since the vector
$\ds \lim_{k\to\infty}\frac{(x_k,v_k)-(x_\ast,v_\ast)}{\|(x_k,v_k)-(x_\ast,v_\ast)\|}$
exists  (passing to a subsequence if necessary)
and is invariant under $DF^n_g$. This contradicts the choice of $g\in\cU^r_n$.

\item[1b).]  $v_\ast=\pm\dot\gamma_g(x_\ast)$: the geodesic $\gamma_k(t)=\exp_{x_k}(tv_k)$
converges to the geodesic $\gamma_g$, which implies that $\gamma_g^n$,
the $n$-iterate of $\gamma_g$, is degenerate under the return map $\phi_{nT}$.
 This also contradicts the choice of $g\in\cU^r_n$.
\end{enumerate}

(2). Note that the nondegenerate closed geodesics persist under perturbations,
and the nondegeneracy under $F^n_g$ is an open condition.
Therefore $g\in\cU_n^r\mapsto {P}_n(F_g)$ is lower semi-continuous.
The upper semi-continuity follows from the continuous dependence of
$\gamma_g$ and $F_g$ on $g\in \cU^r_n\subset \cU^r$, see Proposition
\ref{assign}. Putting them together, we prove the continuity of  ${P}_n(F_g)$.
\end{proof}

\begin{pro}\label{opendense}
The set $\cU^r_n$ contains an open and dense subset of $\cG^r_+$.
\end{pro}
\begin{proof}
The openness of $\cU^r_n$ follows from the finiteness
and continuity dependence of $P_n(F_g)$ in Proposition \ref{finite},
while the denseness follows from Bumpy Metric Theorem.
\end{proof}

Let $\gamma$ be a hyperbolic closed geodesic.
Then $W^s(\gamma)\backslash \gamma$
has two components, say $W^s_{\pm}(\gamma)$.
Similarly we define $W^u_{\pm}(\gamma)$.

\begin{definition}\label{complete}
Let $P_n(F_{g})=\{(x_i,v_i):1\le i\le k\}$ for some $k=k(n)$,
$\gamma_i$ be the closed geodesic on $S^2$ with initial condition $(x_i,v_i)$,
To include the special closed geodesic $\gamma_g$, we re-label it by $\gamma_0$.
Putting them together, we denote the set by $\Gamma_n(g)=\{\gamma_i:0\le i\le k\}$.
\end{definition}

Let $\cV^r_n\subset \cU^r_n$ be the subset of metrics $g$ such that
for any pair of stable and unstable components of
two hyperbolic closed geodesics in $\Gamma_n(g)$,
either they do not intersect, or they admit some transverse intersections.
\begin{pro}\label{opendense1}
The set $\cV^r_n$ contains an open and dense subset of $\cG^r_+$.
\end{pro}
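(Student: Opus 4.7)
The plan is to follow the same two-step strategy used in the preceding Proposition \ref{opendense1}: prove density by iterated application of Donnay's perturbation (Proposition \ref{don1}), and then deduce openness from stability of transverse intersections combined with the finiteness/continuity of $\Gamma_n(g)$ from Proposition \ref{finite}. The candidate open and dense subset is $\cW^r_n := \operatorname{int}(\cV^r_n)$, which is tautologically open, so the task reduces to showing that $\cW^r_n$ is dense in $\cG^r_+$.

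For density, fix $g_0 \in \cG^r_+$ and any $C^r$-neighborhood of $g_0$. By the previous proposition I may assume $g_0 \in \cU^r_n$. Proposition \ref{finite} then furnishes a finite list $\Gamma_n(g_0) = \{\gamma_0, \gamma_1, \dots, \gamma_k\}$, hence a finite collection of ordered pairs $(W^s_\alpha(\gamma_i), W^u_\beta(\gamma_j))$ of stable/unstable components of hyperbolic closed geodesics in $\Gamma_n(g_0)$. I enumerate these pairs and induct: whenever the current pair has a non-empty intersection but no transverse point, I apply Proposition \ref{don1} to perform a $C^r$-small perturbation creating a transverse intersection for that pair. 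Each successive perturbation is chosen small enough to (i) stay inside $\cU^r_n$, (ii) preserve the continuations of the orbits in $\Gamma_n$ (using the continuity in Proposition \ref{finite}), and (iii) preserve all transverse intersections produced at earlier steps. After finitely many steps one arrives at some $g_\ast \in \cV^r_n$ inside the original neighborhood, at which every non-empty pair admits a transverse intersection.

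For openness at $g_\ast$, each transverse intersection is locally a 1-dimensional transverse curve --- the intersection of two 2-dimensional submanifolds of the 3-manifold $M_{g_\ast}$ --- and hence persists under $C^r$-small perturbations of the metric; combined with continuity of $\Gamma_n(g)$, a whole neighborhood of $g_\ast$ retains a transverse intersection for every pair that was non-empty at $g_\ast$. The main obstacle is the behavior of pairs that are empty at $g_\ast$: under perturbation such a pair could become non-empty by a tangential (non-transverse) contact, temporarily violating the defining property of $\cV^r_n$. To push $g_\ast$ into $\cW^r_n$ I would exploit the positive-codimension nature of tangential contact between two specified nearby surfaces: a further finite sequence of Donnay-type perturbations re-runs the density step on any freshly appearing tangential pair, eventually placing the metric inside $\operatorname{int}(\cV^r_n) = \cW^r_n$ while remaining arbitrarily close to $g_0$, which gives the desired density of $\cW^r_n$.
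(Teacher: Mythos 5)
Your overall strategy (finiteness and continuity of $\Gamma_n(g)$ from Proposition \ref{finite}, Donnay's perturbation to create transverse intersections, stability of transversality) is the same as the paper's, but there is a genuine gap at the crux, which you yourself flag and then wave away. The difficulty is that ``the pair $\bigl(W^s_\alpha(\gamma_i),W^u_\beta(\gamma_j)\bigr)$ is disjoint'' is \emph{not} an open condition. This bites twice. First, already in your density step: after you perturb to make the last pair in your enumeration transverse, an earlier pair that was empty may have acquired a non-transverse intersection, so a single pass through the enumeration does not obviously land in $\cV^r_n$; your bookkeeping ``empty or transverse'' is not monotone under the successive perturbations. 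Second, and more seriously, in the openness step: to place the metric in $\operatorname{int}(\cV^r_n)$ you must rule out tangential intersections appearing at \emph{nearby} metrics, and ``re-running the density step on freshly appearing tangential pairs'' does not obviously terminate for the same reason --- each new perturbation can create new tangencies for pairs you thought were settled --- while the appeal to ``positive codimension of tangential contact'' is not an argument here (it would require a parametric transversality statement for invariant manifolds under metric perturbations, which you do not have).

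The paper supplies exactly the missing idea: for each fixed pair $(i,j,\alpha,\beta)$ it partitions a neighborhood $\cV$ of $g_0$ into $\cV_1$, the metrics for which the pair is \emph{robustly} disjoint (disjoint for all sufficiently close metrics), and $\cV_2$, its complement. The target set for that pair is $\cV_1\cup\{g:\text{the pair has a transverse intersection}\}$; both defining conditions are open, so this set is open, and it is dense because any $g\in\cV_2$ is approximated by metrics where the pair intersects, which Proposition \ref{don1} then perturbs to a transverse intersection. Intersecting over the finitely many pairs gives the open dense subset of $\cV_{ij\alpha\beta}$'s and hence of $\cV^r_n$. If you replace your condition ``empty'' by ``robustly empty'' throughout, your iteration becomes monotone (pairs already in the good state stay there under small perturbations) and terminates after at most one step per pair; without that replacement the argument does not close.
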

\begin{proof}
Let $g_0\in  \cU^r_n$, $P_n(F_0)$ be the set of points fixed by $F^n_0$.
We can label them by $\{(x_1,v_1),\dots,(x_k,v_k)\}$,
all being nondegenerate under $F^n_0$.
According to Proposition \ref{finite}, we can find
a small neighborhood $\cV$ of $g_0$ such that
$P_n(F_{g})=\{(x_1(g),v_1(g)),\dots,(x_k(g),v_k(g))\}$,
where $(x_i(g),v_i(g))$ is the continuation of $(x_i,v_i)$, $1\le i\le k$.
Let $\gamma_i$ be the closed geodesic on $(S^2,g_0)$ with initial condition $(x_i,v_i)$,
and  $\gamma_i(g)$ be the closed geodesic on $(S^2,g)$
with initial condition $(x_i(g),v_i(g))$.
To include the special closed geodesic $\gamma_g$, we re-label it by $\gamma_0(g)$.
Putting them together as in Definition \ref{complete}, we denote the set by
$\Gamma_n(g)$.

Let $i,j\in\{0,\dots,k\}$ be two indices such that $\gamma_i$ and $\gamma_j$ are hyperbolic,
and $\alpha,\beta\in\{+,-\}$ indicate the components
of the stable and unstable manifolds we could pick.
Note that we have added the index $0$ corresponding to the geodesic $\gamma_g$.
Let $\cV_{ij\alpha\beta}$ be the subset of metrics $g$ in $\cV$ such that
either $W^{s}_\alpha(\gamma_i(g))\cap W^{u}_\beta(\gamma_j(g))=\emptyset$,
or $W^{s}_\alpha(\gamma_i(g))$ and $W^{u}_\beta(\gamma_j(g))$
admit some transverse intersections.
It suffices to show that $\cV_{ij\alpha\beta}$ contains an open and dense subset
in $\cV$.

We partition $\cV$ into two parts $\cV_1\cup\cV_2$, where
\begin{enumerate}
\item $g\in\cV_1$ if
$W^{s}_\alpha(\gamma_i(\hat g))\cap W^{u}_\beta(\gamma_j(\hat g))=\emptyset$
for any $\hat g$ sufficiently close to $g$;

\item $g\in\cV_2$ if
$W^{s}_\alpha(\gamma_i(g_l))\cap W^{u}_\beta(\gamma_j(g_l))\neq\emptyset$
for some sequence $g_l\to g$ as $l\to\infty$.
\end{enumerate}
It is clear that $\cV_1$ is open (may be empty) and $\cV_1\subset \cV_{ij\alpha\beta}$.
For each $g\in\cV_2$, let $g_l$, $l\ge 1$
 be the sequence given as in the above definition of $\cV_2$.
If $W^{s}_\alpha(\gamma_i(g_l))$ and $W^{u}_\beta(\gamma_j(g_l))$
admits a transverse intersection for infinitely many $l$, then
there exists a small neighborhood of $g_l$ that is contained in $\cV_{ij\alpha\beta}$.
If $W^{s}_\alpha(\gamma_i(g_l))$ and $W^{u}_\beta(\gamma_j(g_l))$
intersect non-transversely at some point, say $z=(x,v)$,
then we apply Proposition \ref{don1}
to find a small perturbation of $g_l$ such that
$W^{s}_\alpha(\gamma_i(\hat g_l))$ and $W^{u}_\beta(\gamma_j(\hat g_l))$
admits a transverse intersection. Then we use this new sequence $\hat g_l$.
This completes the proof.
\end{proof}

Let $\cR^r_{KS}=\bigcap_{n\ge 1}\cV^r_n$, which contains a residual subset of $\cG^r_+$.
\begin{thm}\label{KS}
There is a residual subset $\cR^r_{KS}$ of $\cG^r_+$, such that for any $g\in\cR^r_{KS}$,
the geodesic flow $\phi_t$ on the unit tangent bundle  $M_g\subset TS^2$ satisfies:
\begin{enumerate}
\item every closed geodesic is either hyperbolic or irrationally elliptic.

\item if $\gamma$ and $\eta$ are hyperbolic, then for any component of
$W^{s}_{\pm}(\gamma)$ and of $W^{u}_{\pm}(\eta)$, either they do not intersect,
or they admit some transverse intersections.
\end{enumerate}
\end{thm}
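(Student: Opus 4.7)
The plan is to combine Proposition \ref{opendense1} with the Bumpy Metric Theorem via a Baire category argument, and then read off both conclusions by letting $n$ range over all positive integers in the definition of $\cV^r_n$.

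First, Proposition \ref{opendense1} gives that each $\cV^r_n$ contains an open and dense subset of $\cG^r_+$, so $\bigcap_{n\ge 1}\cV^r_n$ contains a residual subset by the Baire category theorem, and hence so does $\cR^r_{KS}$. To secure irrational ellipticity in part (1), I would further intersect with the residual set of bumpy metrics furnished by the Bumpy Metric Theorem of Section \ref{sec-bum}; the resulting intersection is still residual and every metric in it is bumpy. Part (1) is then immediate from the very definition of bumpiness.

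For part (2), I would fix $g\in\cR^r_{KS}$ together with two hyperbolic closed geodesics $\gamma,\eta$ and specified components $W^s_\alpha(\gamma)$ and $W^u_\beta(\eta)$. Using the Birkhoff section $A_g$ from Section \ref{sec-bir}, the special geodesic $\gamma_g$ is handled as $\gamma_0\in\Gamma_n(g)$, while every other closed geodesic appears as a periodic point of the annulus map $F_g$ of some minimal period $m$, and is therefore an element of $\Gamma_{km}(g)$ for all $k\ge 1$. Choosing $n$ to be a common multiple of the two relevant periods, I obtain $\gamma,\eta\in\Gamma_n(g)$, and the condition $g\in\cV^r_n$ then yields directly that $W^s_\alpha(\gamma)$ and $W^u_\beta(\eta)$ are either disjoint or admit a transverse intersection.

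The main obstacle, which is really why the elaborate framework of Sections \ref{sec-per-tr} and \ref{sec-tra} was built up, is that the open dense approximation used to produce $\cV^r_n$ must simultaneously respect every pair of stable/unstable branches among the finitely many closed geodesics in $\Gamma_n(g)$; once that is encoded in Proposition \ref{opendense1} the remainder of Theorem \ref{KS} is routine bookkeeping. A secondary point, also already handled upstream, is that convexity of $(S^2,g)$ is what guarantees $A_g$ is a genuine cross-section with bounded return time, so that every closed geodesic is indexed by some $\Gamma_n(g)$ and the residual intersection captures hyperbolic geodesics of arbitrary period.
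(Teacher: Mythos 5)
Your proof is correct and follows the same route as the paper, which treats Theorem \ref{KS} as an immediate consequence of setting $\cR^r_{KS}=\bigcap_{n\ge 1}\cV^r_n$ and invoking the Baire category theorem together with the fact that every closed geodesic other than $\gamma_g$ is captured by the Birkhoff annulus map. The only inessential difference is your extra intersection with the bumpy metrics: this is harmless but actually unnecessary, because $\cV^r_n\subset\cU^r_n$ already enforces $\mathrm{Tr}(\gamma_g^n)\neq 2$ and nondegeneracy of every point of $P_n(F_g)$ under $F^n_g$, so intersecting over all $n$ already yields that every closed geodesic is hyperbolic or irrationally elliptic.
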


Let $\cV^\infty_n=\left(\bigcup_{r\ge 2}\cV^r_n\right)\cap \cG^\infty_+$, which
contains an open and dense subset of $\cG^\infty_+$.
Let $\cR^\infty_{KS}=\bigcap_{n\ge 1}\cV^\infty_n$, which
contains a residual subset of $\cG^\infty_+$.
Therefore, the conclusions of Theorem \ref{KS} also hold for $r=\infty$.

\begin{remark}
The above theorem does not claim
that the two components $W^{s}_{\pm}(\gamma)$ and $W^{u}_{\pm}(\eta)$
are transverse, since we do not try to remove all non-transverse intersections.
More crucially, the above theorem does not specify
when the two components $W^{s}_{\pm}(\gamma)$ and  $W^{u}_{\pm}(\eta)$
have nontrivial intersections. In the next section, we will
show the existence of homoclinic intersections
for each hyperbolic closed geodesic on a generic convex sphere.
\end{remark}

\section{Homoclinic intersections for hyperbolic closed geodesics}
\label{sec:homo}

In this section, we study the existence of homoclinic intersections
of hyperbolic closed geodesics of the geodesic flow on the unit tangent bundle
of a convex sphere. More precisely, let $\cG^r_+$ be the set of convex Riemannian metrics
on $S^2$, that is, the set of metrics with positive Gauss curvature.
There is an open and dense subset $\cU^r\subset \cG^r_+$ (see Proposition \ref{assign})
such that for each $g\in \cU^r$, there is exactly
one simple closed geodesic $\gamma_g$ of shortest length.
Let $A_g\subset S_g$ be the Birkhoff annulus and $F_g$ be the Birkhoff annulus map
of the geodesic flow with respect to $A_g$,
$P_n(F_g)$ be the set of points in $A_g$ fixed by $F^n_g$, and
$\Gamma_n(g)$ be the set of closed geodesics corresponding to $P_n(F_g)$
with one additional $\gamma_g$ (see Definition \ref{complete}).
Our main result in this section is the following.
\begin{pro}\label{homo1n}
There is an open and dense subset $\cW_n^r\subset \cG^r_+$
such that for each $g\in \cW_n^r$, for each hyperbolic closed geodesic
$\gamma\in\Gamma_n(g)$,
there exist some transverse homoclinic intersections for $\gamma$.
\end{pro}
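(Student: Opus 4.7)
My plan is to work inside the open dense set $\cV^r_n \subset \cG^r_+$ from Proposition~\ref{opendense1}, where every pair of stable/unstable components of two hyperbolic closed geodesics in $\Gamma_n(g)$ either fails to meet at all or intersects transversely. Proposition~\ref{homo1n} therefore reduces to showing that, on an open dense subset $\cW^r_n \subset \cV^r_n$, every hyperbolic $\gamma \in \Gamma_n(g)$ has $W^s(\gamma) \cap W^u(\gamma) \supsetneq \gamma$; transversality at the new intersection points will be automatic. Openness of $\cW^r_n$ is immediate, since a single transverse homoclinic point persists under small $C^r$ perturbations of the metric and the set $\Gamma_n(g)$ itself varies continuously on $\cU^r_n$ by Proposition~\ref{finite}.

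For denseness I would fix $g_0 \in \cV^r_n$ and enumerate the hyperbolic elements of $\Gamma_n(g_0)$ as $\gamma^{(1)}, \ldots, \gamma^{(m)}$, a finite list by Proposition~\ref{finite}. I then perturb $g_0$ inductively, producing at step $i$ an arbitrarily small $C^r$ perturbation $\hat g$ with the property that the continuation of $\gamma^{(i)}$ acquires a non-trivial intersection of its invariant manifolds. Crucially, the metric perturbations built in the proof of Proposition~\ref{per-tr} are supported in a tubular neighbourhood of a chosen arc of the geodesic, so they can be arranged to avoid the finitely many $\gamma^{(j)}$, $j<i$, already equipped with a transverse homoclinic. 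Thus the inductive construction does not destroy earlier intersections, and transversality at each new intersection is granted by the density of $\cV^r_n$.

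To create the intersection I would lift the problem to the Birkhoff annulus $A_g$ and its Poincar\'e map $F_g$, which preserves the finite area form $\sin\theta\,dt\wedge d\theta$. A hyperbolic $\gamma \in \Gamma_n(g)$ with $\gamma \neq \gamma_g$ corresponds to an interior hyperbolic periodic point $p$ of $F^n_g$, and homoclinic intersections of $\gamma$ for the flow are in bijection with homoclinic intersections of $p$ for $F_g$. Prime-end compactifying $A_g$ yields a closed topological annulus $\overline{A}_g$ on which $F_g$ extends as an area-preserving homeomorphism. I would then adapt Pixton's argument \cite{Pix} to this setting: the closure of a branch of $W^u(p)$ is a non-empty invariant compactum in $\overline{A}_g$, and area preservation together with Poincar\'e recurrence force it to accumulate back on $p$ unless it is pushed entirely onto the prime-end boundary. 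In that residual case, I would invoke Proposition~\ref{per-tr} to alter $\mathrm{Tr}(\gamma)$ and thus bend the invariant manifolds back into the annulus interior, after which a further arbitrarily small perturbation along the lines of Proposition~\ref{don1} transversalises the newly produced intersection.

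The main obstacle, and the case I expect to require the most care, is $\gamma = \gamma_g$ itself, since in the prime-end model $\gamma_g$ lies on the boundary of $\overline{A}_g$ and the standard interior recurrence argument does not apply directly. Here I would exploit the minimax characterization of $\gamma_g$ from Section~\ref{sec-bir}: by positive curvature the two branches of $W^u(\gamma_g)$ enter the interior of $A_g$ and cannot escape to the opposite prime-end boundary without first traversing a compact ``trapping'' region that forces a first encounter with $W^s(\gamma_g)$. Making this trapping rigorous, and combining it with a localised metric perturbation (supported along a simple arc of $\gamma_g$ as in Proposition~\ref{per-tr}) to break any non-generic tangency, is the step where the argument is most delicate; once it is in hand, the resulting intersection is transverse because $\hat g \in \cV^r_n$, and the induction closes.
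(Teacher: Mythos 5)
Your proposal correctly identifies the reduction to $\cV^r_n$, the openness argument, and the idea of lifting to the Birkhoff annulus and using prime-end compactification, but it is missing the central technical ingredient that makes the paper's argument work: Herman's Last Geometric Theorem. The paper's proof of denseness does not inductively perturb to \emph{create} homoclinic points for each hyperbolic $\gamma^{(i)}$; rather, it makes a \emph{single} $C^r$-small perturbation to arrange that every elliptic periodic orbit in $P_{2n}(F_g)$ (and $\gamma_g$, if elliptic) has Diophantine rotation number (Proposition~\ref{dense}), after which Herman's LGT gives nonlinear stability of every elliptic fixed point of $F_g^{2n}$. That nonlinear stability, together with the transversality coming from $\cV^r_n$, is precisely what allows Mather's theorem \cite{Mat81} to conclude that all four branches $W^{s,u}_{\pm}((x,v),F_g^{2n})$ of an interior hyperbolic fixed point have the \emph{same} closure, and then the two-gate closed-curve intersection argument produces a homoclinic point with \emph{no further perturbation}. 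Your version of the step --- ``area preservation together with Poincar\'e recurrence force it to accumulate back on $p$'' --- is too vague to rule out the branch being trapped by elliptic islands or invariant circles; that is exactly what nonlinear stability of elliptic points is needed to control, and Poincar\'e recurrence alone will not deliver it. Moreover, your fallback of perturbing $\mathrm{Tr}(\gamma)$ via Proposition~\ref{per-tr} ``to bend the invariant manifolds back into the annulus interior'' does not produce a homoclinic intersection: changing the trace changes the eigenvalues at $\gamma$, but the global behaviour of $W^{s,u}(\gamma)$ is not governed by that local linearization, so this step does not close the gap.

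You also correctly flag $\gamma = \gamma_g$ as the troublesome boundary case, but your proposed ``trapping region'' built from the minimax characterization is left unresolved, and it is not clear it can be made rigorous. The paper's solution (Lemma~\ref{miss}) is cleaner and entirely different: invoke Lyusternik--Shnirel'man to obtain a \emph{second} simple closed geodesic $\eta_g$, pass to the Birkhoff annulus $A_{\eta_g}$ over $\eta_g$, and observe that $\gamma_g$ corresponds to an \emph{interior} hyperbolic fixed point of $\hat F_g^2$ there. Because its period-two fixed points correspond to closed geodesics in $\Gamma_2(g)$, all the hypotheses needed for the Mather argument (nonlinear stability of the elliptic ones, transversality of the relevant intersections, all four branches fixed) already hold for $g\in\cD_{2n}$, and the same closed-curve argument applies verbatim. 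This trick sidesteps the boundary difficulty entirely and is the step your proposal most needs.
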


It suffices to show that such $\cW^r_n$ is open and dense in $\cV^r_n$
(see Proposition \ref{opendense1} and the paragraph above it
for the definition of the  subset $\cV_n^r$).
Note that $P_n(F_g)$ and $\Gamma_n(g)$
are finite and vary continuously over $g\in\cV_n^r$.
Moreover, transverse homoclinic intersections, once created,
persist under small perturbations.
Therefore the set $\cW_n^r$ is  open in $\cV_n^r$.
So it suffices to prove the $C^r$ denseness of $\cW_n^r$ in $\cV_n^r$.

\subsection{Nonlinear stability of elliptic periodic points}\label{sec-her}
Let $f$ be a $C^\infty$ diffeomorphism on a surface $S$
that preserves a smooth measure on $S$. Let  $p$ be a fixed point of $f$.
Then $p$ is said to be {\it nonlinearly stable},
if there exist a sequence of $f$-invariant nesting closed disks
$D_n$, $n\ge 1$  such that  $p\in D_{n+1}\subset D^o_n$,
$\bigcap_n D_n =\{p\}$ and $f|_{\partial D_n}$ is transitive.
If a fixed point is not  nonlinearly stable, then we say it is
{\it nonlinearly unstable}.

Recall that a real number $\rho$
is said to be {\it Diophantine}, if there exist two positive numbers $c,\tau$
such that
\begin{equation}
\left|\rho-\frac{m}{n}\right|\ge \frac{c}{|n|^{2+\tau}},
\text{ for all rational numbers }\frac{m}{n}.
\end{equation}
Then an elliptic fixed point $p$ of $f$
is said to have Diophantine rotation number,
if the tangent map $D_pf$ is conjugate to a rotation
$R_\rho=\begin{bmatrix}\cos2\pi\rho & -\sin2\pi\rho \\
\sin2\pi\rho & \cos2\pi\rho \end{bmatrix}$
for some Diophantine number $\rho$.

The following is called Herman's {\it Last
Geometric Theorem}, which states that  an elliptic fixed point with Diophantine
rotation number is nonlinearly stable.
See \cite{FK} for the history and a complete proof of this theorem.

\noindent{\bf Herman's  Last Geometric Theorem.}
{\it Let $f\in\mathrm{Diff}^\infty_\mu(S)$ and $p$ be an elliptic fixed point of $f$.
If the rotation number of $p$ is Diophantine, then $p$ is nonlinearly stable.}

\begin{remark}
An elliptic fixed point may be nonlinearly {\it unstable}
if its rotation number $\rho$ is {\it not} Diophantine.
For example, Anosov and Katok constructed in \cite{AK}
a weak mixing area-preserving diffeomorphism $f$
 on  the unit disk $\mathbb{D}\subset \bR^2$
for which the origin $o\in \mathbb{D}$ is an elliptic fixed point.
In this example, the fixed point $o$ is nonlinearly {\it unstable}
and its rotation number is {\it not} Diophantine.
\end{remark}

\vskip.05in

By taking a cross-section and considering the Poincar\'e map, we see that
the same result holds for elliptic closed geodesics of the geodesic flows on $(S^2,g)$.

We will use Herman's LGT to prove the denseness of $\cW^r_n$ in $\cV^r_n$.
\begin{pro}\label{dense}
There is a $C^r$ dense subset $\cD_n\subset \cV_n^r\cap \cG^\infty_+$
such that the following hold for each $g\in \cD_n$:
\begin{enumerate}
\item every elliptic periodic point in $P_n(F_g)$ is nonlinearly stable;

\item the shortest geodesic $\gamma_g$ is either hyperbolic or nonlinearly stable.
\end{enumerate}
\end{pro}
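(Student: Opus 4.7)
The plan is to fix $g\in\cV_n^r\cap \cG^\infty_+$, enumerate the elliptic closed geodesics in $\Gamma_n(g)$, and use independent localized applications of Proposition \ref{per-tr} to tune each elliptic rotation number to a Diophantine value. Herman's Last Geometric Theorem will then immediately give nonlinear stability.

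In detail, by Proposition \ref{finite} the collection $\Gamma_n(g)=\{\gamma_0,\gamma_1,\dots,\gamma_k\}$ is finite, and since $g\in\cV_n^r\subset\cU_n^r$ every $\gamma_i$ is nondegenerate, hence either hyperbolic or elliptic (never parabolic). Let $\gamma_{i_1},\dots,\gamma_{i_m}$ denote the elliptic members, with $\gamma_0=\gamma_g$ included in the list if it is elliptic. For each $j$, the projected curve $\gamma_{i_j}\subset S^2$ meets $\bigcup_{l\neq i_j}\gamma_l$ in at most finitely many points, so I can select a simple point $x_{i_j}\in\gamma_{i_j}$ lying on no other geodesic in $\Gamma_n(g)$, and choose pairwise disjoint tubular neighborhoods $U_{i_j}$ of short arcs around each $x_{i_j}$, each $U_{i_j}$ disjoint from $\gamma_l$ for $l\neq i_j$. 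On each $U_{i_j}$, the construction of Proposition \ref{per-tr} provides a $C^\infty$, $C^r$-small perturbation that preserves every $\gamma_l\in\Gamma_n(g)$ (those with $l\neq i_j$ are untouched since $U_{i_j}$ avoids them, and $\gamma_{i_j}$ itself is preserved by the construction) and shifts $\mathrm{Tr}_g(\gamma_{i_j})$ by a continuously varying nonzero amount. Since the rotation number $\rho$ is a smooth, strictly monotone function of the trace on the elliptic range and Diophantine numbers are dense, I can tune the $m$ parameters so that every new rotation number $\hat\rho_{i_j}$ is Diophantine. Performing the $m$ perturbations simultaneously (possible because the supports are disjoint) yields $\hat g\in\cG^\infty_+$ with $\|\hat g-g\|_{C^r}$ arbitrarily small; by openness of $\cV_n^r$, $\hat g\in\cV_n^r$ once the perturbation is small enough. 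Finally, Herman's LGT applied to each elliptic fixed point of the appropriate iterate of $F_{\hat g}$ (and to the Poincare map on a small transverse disk for $\gamma_g$ itself) delivers nonlinear stability.

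The main obstacle is the simultaneous localization step. To ensure that perturbing near $\gamma_{i_j}$ does not disturb the other closed geodesics or their rotation numbers, each support $U_{i_j}$ must be genuinely disjoint from the entire remaining collection, not merely from $\gamma_{i_j}$ away from its self-intersections; finiteness of $\Gamma_n(g)$ and compactness of each geodesic make this possible, but the choice of simple points must be made with respect to the whole family at once. A secondary delicacy is that $\gamma_g$ lies on the boundary $\theta\in\{0,\pi\}$ of the Birkhoff annulus and is not a periodic point of $F_g$ in its interior; to apply Herman's theorem to $\gamma_g$ I instead use a small disk transverse to the geodesic flow and note that Proposition \ref{per-tr} tunes the trace, hence the rotation number, of $\gamma_g$ on that disk. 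Once these localization choices are in place, the remaining verifications---continuity of the trace under the perturbation and density of Diophantine rotation numbers---are standard, and the set $\cD_n$ of metrics obtainable this way is by construction $C^r$-dense in $\cV_n^r\cap\cG^\infty_+$.
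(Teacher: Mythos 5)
Your proposal is correct and follows essentially the same route as the paper: select a simple point on each closed geodesic of $\Gamma_n(g)$ (simple with respect to the whole finite family), perturb the metric in disjoint small neighborhoods via Proposition \ref{per-tr} to make every elliptic rotation number Diophantine, and conclude nonlinear stability from Herman's Last Geometric Theorem. The extra care you take with disjointness of supports and with treating $\gamma_g$ via a transverse disk rather than as an interior point of the Birkhoff annulus fills in details the paper leaves implicit, but does not change the argument.
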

\begin{proof}
Let $g\in\cV_n^r\cap \cG^\infty_+$ be a $C^\infty$ smooth metric,
$\gamma_g$ be the shortest simple closed geodesic
and $F_g$ be the Birkhoff annulus map of the geodesic flow $\phi_t$
on the Birkhoff annulus $A_g=\gamma_g\times (0,\pi)$.
Let $P_n(F_g)$ be the set of points in $A_g$ fixed by $F_g^n$,
and label them as $\{(x_i,v_i)\}_{i=1}^{k_n}$. Let $\gamma_i$ be the closed geodesic
with initial condition $(x_i,v_i)$ for each $i=1,\cdots,k_n$
and let $\gamma_0=\gamma_g$.
Clearly they are all nondegenerate. For each $j=0,\cdots,k_n$, we pick a simple
point $x_j\in\gamma_j$ in the sense that the union $\bigcup_{i=0}^{k_n}\gamma_i$
covers $x_j$ only once. Then we make a $C^\infty$-smooth and $C^r$-small
perturbation of the metric $g$ in a small neighborhood
$U_j$ of $x_j$, say the new metric $\hat g$, such that
every elliptic closed geodesic among those $\{\gamma_i(\hat g):0\le i\le {k_n}\}$
has Diophantine rotation number.
Then Herman's LGT guarantees that all the elliptic closed geodesics
among $\{\gamma_i(\hat g):0\le i\le {k_n}\}$ are nonlinearly stable,
and hence $\hat g\in \cD_n$.
Therefore, $\cD_n\subset \cG^\infty_+$ and is dense in $\cV^r_n$.
This completes the proof.
\end{proof}

After we are done with the elliptic ones, let's move on to study the hyperbolic
periodic points in $P_n(F_g)$.
Although each hyperbolic periodic point $(x,v)\in P_n(F_g)$  is fixed by $F^n_g$,
the two branches of the stable (and unstable) manifolds of $(x,v)$  may
be switched by $F^n_g$.
A simple trick we use here is to double the iterates: the map $f=F^{2n}_g$
not only fixes each point in $P_n(F_g)$,
but also fixes each branch of the invariant manifolds
of the hyperbolic periodic points in $P_n(F_g)$.
In the following we will use this pairing
between $2n$-th iterate
and the periodic orbits of period $n$.
Let $\cD_n$, $n\ge 1$ be the sequence of dense subsets of $\cV_n^r$ given by
Proposition \ref{dense}.
The following lemma is proved by applying
the {\it Prime End Compactification} method developed by Mather in
\cite{Mat81,Mat82}, since the topology of the
manifold $A_g$ is very simple. See also \cite{Oli1,FrLC}.
\begin{lem}\label{PnF}
Let $g\in \cD_{2n}$. Then every hyperbolic periodic point in $P_n(F_g)$
admits some transverse homoclinic intersections.
\end{lem}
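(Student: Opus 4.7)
My plan is to adapt the prime end compactification argument of Mather, as used by Pixton in \cite{Pix} and by Oliveira in \cite{Oli1}, to the Birkhoff annulus map $F_g$ on a convex sphere. First I would pass to the iterate $G := F_g^{2n}$, so that every hyperbolic $p \in P_n(F_g)$ becomes a hyperbolic fixed point of $G$ with each of its four invariant-manifold branches $W^u_\pm(p), W^s_\pm(p)$ individually $G$-invariant. Because $g \in \cD_{2n} \subset \cV^r_{2n}$ and the closed geodesic attached to $p$ lies in $\Gamma_{2n}(g)$, any nonempty intersection of $W^u(p)$ and $W^s(p)$ is automatically transverse by the definition of $\cV^r_{2n}$. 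It therefore suffices to produce a single nontrivial intersection point between some branch of $W^u(p)$ and some branch of $W^s(p)$.

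Suppose for contradiction that no such intersection exists. The key geometric input is that $g \in \cD_{2n}$ makes every elliptic periodic point in $P_{2n}(F_g)$ nonlinearly stable, and $\gamma_g$ either hyperbolic or nonlinearly stable. In the nonlinearly stable case for $\gamma_g$, nested invariant tori around $\gamma_g$ in $M_g$ trace nested essential $G$-invariant circles in $A_g$ accumulating on the two ends $\theta \to 0, \pi$, and these circles block every branch from escaping to an end; an analogous shielding prevents any branch from accumulating on any other nonlinearly stable periodic point of $G$. In the hyperbolic case for $\gamma_g$, its own invariant manifolds lie in $\Gamma_{2n}(g)$ and either supply an immediate transverse homoclinic at $p$ via a heteroclinic cycle and the $\lambda$-lemma, or play the role of the KAM barriers above. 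In either scenario, $\overline{W^u_+(p)}$ is a compact $G$-invariant continuum trapped in a proper sub-annulus of $A_g$.

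Next I would select a $G$-invariant open region $U$ --- a connected component of $A_g$ minus the closure of an appropriate combination of the four invariant-manifold branches at $p$ --- having $p$ on its boundary, lift $A_g$ to its universal cover (or cut along an invariant essential curve) so that $U$ becomes disk-like, and apply Mather's prime end compactification \cite{Mat81, Mat82} to obtain a $G$-equivariant homeomorphism $\widehat G$ of the prime end circle of $\widehat U$ with rotation number $\rho$. The saddle structure at $p$ supplies at least two accessible fixed prime ends (the branches of $W^{u,s}$ flanking $U$), forcing $\rho$ to be rational; hence there exist accessible periodic points of $G$ distinct from $p$. Such accessible periodic points have $F_g$-period dividing a bounded multiple of $n$, so they lie in some $P_m(F_g)$ with $m \mid 2n$ (after iterating once more if necessary), and are therefore nondegenerate by $g \in \cV^r_{2n}$. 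Nonlinear stability forbids elliptic accessible periodic points (they would be shielded from $U$ by their invariant disks), so they are hyperbolic; by $g \in \cV^r_{2n}$ their invariant manifolds meet those of $p$ transversely whenever they meet, which via a Birkhoff-type $\lambda$-lemma chain yields a transverse homoclinic intersection for $p$, contradicting the assumption and completing the proof.

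The main obstacle will be the prime end step: setting up the compactification so that the annulus topology is handled correctly (lifting to a cover or cutting along an invariant essential curve so that $U$ is genuinely disk-like) and carrying out the rotation number dichotomy while keeping track of which accessible periodic point bridges back to $p$. A secondary subtlety is the robust use of nonlinear stability, since in the perturbation producing $\cD_{2n}$ one only arranged each elliptic orbit in $P_{2n}(F_g)$ to have a Diophantine rotation number under Herman's Last Geometric Theorem, and one must check that this is still sufficient to shield every accessible periodic point that arises in the prime end analysis. Both difficulties have been navigated in the related settings of \cite{Pix}, \cite{Oli1}, and \cite{FrLC}; the present setting requires importing those techniques with the modification that the two ends of $A_g$ encode the dynamics of $\gamma_g$ itself.
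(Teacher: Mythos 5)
Your opening moves (pass to $f=F_g^{2n}$ so that all four branches of the hyperbolic point are individually invariant; use $g\in\cD_{2n}$ to get transversality of any intersection and nonlinear stability of the elliptic points) match the paper, but from there the paper's proof and yours diverge, and yours has genuine gaps. The paper does not run a rotation-number dichotomy to manufacture accessible periodic points: it invokes Mather's theorem from \cite{Mat81} as a black box to conclude that, under exactly the two standing hypotheses supplied by $g\in\cD_{2n}$, all four branches $W^{s,u}_{\pm}((x,v),f)$ have the \emph{same closure}. It then finishes by pure topology: close the gate of a stable branch and of an unstable branch entering the same quadrant to get two simple closed curves $\gamma^s,\gamma^u\subset A_g$ meeting transversally at $(x,v)$; since the algebraic intersection number of two simple closed curves in an annulus vanishes, they must meet again, and the second intersection is the desired homoclinic point (details in \cite{XZ}). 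No accessible periodic point other than $(x,v)$ and no $\lambda$-lemma chain is needed.

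The concrete problems with your route are these. (i) The step ``the prime-end rotation number is rational, hence there exist accessible periodic points of $G$ \emph{distinct from} $p$'' is not a quotable fact: the accessible fixed prime ends you exhibit all lie over $p$ itself, and producing a different accessible periodic point in $\partial U$ is essentially the content of Mather's analysis, which you are re-deriving without proof. (ii) Even granting such a point $q$, your claim that it lies in $P_m(F_g)$ with $m\mid 2n$ is false: a point of $\widehat G$-period $q$ on the prime-end circle corresponds to a point of $P_{2nq}(F_g)$ with $q$ a priori unbounded, and $g\in\cD_{2n}\subset\cV^r_{2n}$ controls nondegeneracy, nonlinear stability and transversality only for periodic points of period dividing $2n$; so the ``shielding'' and ``transverse whenever they meet'' steps you apply to $q$ have no support from the hypotheses. (iii) Even with transversality in hand, ``they meet transversely whenever they meet'' does not give that $W^u(p)\cap W^s(q)$ \emph{and} $W^u(q)\cap W^s(p)$ are both nonempty, which is what your $\lambda$-lemma chain requires; accessibility of $q$ from $U$ does not by itself produce either intersection. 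The fix is to replace the middle of your argument by the citation of Mather's same-closure theorem and then conclude with the gate-closing intersection-number argument on the annulus.
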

As we have pointed out,
the index $2n$ of $\cD_{2n}$ is {\it  not} a misprint.
\begin{proof}
Let $g\in \cD_{2n}$,  $\gamma_g$ be the shortest simple closed geodesic
and $F_g$ be the Birkhoff annulus map of the geodesic flow $\phi_t$
on the Birkhoff annulus $A_g=\gamma_g\times (0,\pi)$.
For simplicity we denote $f=F^{2n}_g$.
Note that all elliptic fixed points of $f$ are nonlinearly stable,
and any two branches of the stable and unstable manifolds of
the hyperbolic fixed points are either disjoint or admit some transverse intersections.

Let $P_n(F_g)$ be the set of points in $A_g$ fixed by $F_g^n$,
and $z=(x,v)\in P_n(F_g)$ be a hyperbolic periodic point.
Then $z$ is a hyperbolic fixed point of $f$ and all four branches
of the stable and unstable manifolds are also fixed by $f$.
Then Mather's theorem \cite[Theorem 5.2]{Mat81} implies that all four
branches $W^{s,u}_{\pm}(z,f)$ are recurrent and they have the same closure:
$\overline{W^{\sigma}_\alpha(z,f)}=E(z)$ for each $\sigma\in\{s,u\}$
and each $\alpha\in\{+,-\}$, see also \cite[Corollary 3.4]{XZ}.
Then it is standard to prove the existence of homoclinic intersections assuming the 
recurrence of the stable and unstable manifolds,
see \cite[Theorem 4.3]{XZ} for more details.
For completeness, we give some explanation
below to illustrate the geometric picture about the construction.

First, we set up a local coordinate system $\chi:\mathbb{R}^2\to U$
around a small neighborhood $U\subset  A_g$ of $z$,
such that $\chi(0)=z$,
the local stable manifold of $z$ leaves $0$ along the $x$-axis,
and the local unstable manifold of $z$ leaves $0$ along the $y$-axis.
Then the local stable and unstable manifolds of $z$ divide $U$ into four
quadrants, say $Q_i$, $1\le i\le 4$.
Since the stable branch $W^s_+(z)$ is recurrent, it will accumulate
on itself through the neighboring quadrants $Q_1$ and/or $Q_4$.
Without loss of generality
we assume $W^s_+(z)$ accumulates in the first quadrant $Q_1$.
Then the unstable branch $W^u_+(z)$ will also accumulate
in $Q_1$, since its closure contains $W^s_+(z)$ (by Mather's result).

Pick a small number $\epsilon>0$,
and consider the subset $S_\epsilon=\{(x,y)\in U:0< x,y\le 1, xy\le \epsilon \}$
in $Q_1$. Let $q$ be the first moment on $W^s_+(z)$ that hits $S_\epsilon$,
and $\gamma^s$ be the closed curve that starts from $z$, moves along
 $W^s_+(z)$  to the point $q$, and then along the segment
 from $q$ to $z$. Note that $\gamma^s$ is a simple closed curve,
 since $q$ is the first intersection of $W^s_+(z)$ with $S_\epsilon$.
Similarly, let $\hat q$ be the first moment on $W^u_+(z)$ that hits $S_\epsilon$,
and construct the curve $\gamma^u$.
See Fig.~\ref{adjacent}.

\begin{figure}[h]
\begin{overpic}[width=60mm]{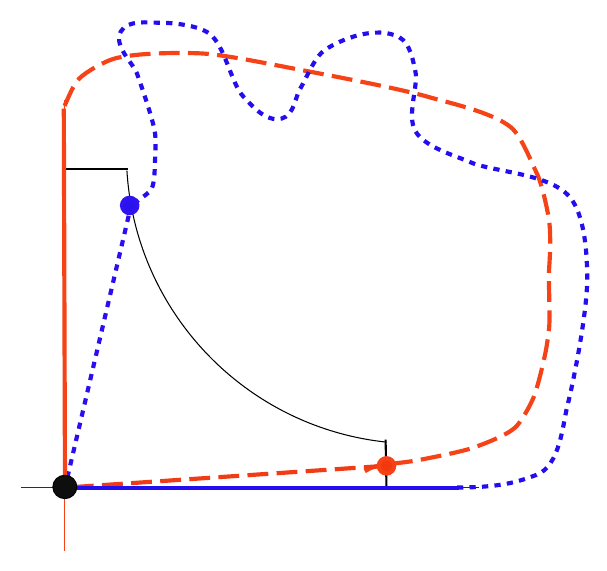}
\put(13,8){$z$}
\put(33,7){$W^s_+(z)$}
\put(-10,35){$W^u_+(z)$}
\put(83,35){$\gamma^u$}
\put(98,35){$\gamma^s$}
\put(20,25){$S_{\epsilon}$}
\put(25,60){$q$}
\put(65,21){$\hat q$}
\end{overpic}
\caption{The closing curves $\gamma^s$ (blue)
and $\gamma^u$ (red).}
\label{adjacent}
\end{figure}

These two simple closed curves $\gamma^{s,u}$
cross each other once at the fixed point $z$.
On the other hand, the algebraic intersection number of two simple closed curves
on an annulus $A_g$ must be zero. Therefore, the two curves must intersect
somewhere else, and that intersection is a homoclinic intersection.
This completes the proof.
\end{proof}

Comparing the set $P_n(F_g)$ with $\Gamma_n(g)$,
we see that there is only one geodesic left from Lemma \ref{PnF}:
the geodesic $\gamma_g$ for each $g\in\cD_{2n}$.
\begin{remark}
Poincar\'e \cite{Poi05} conjectured
that there exists a non-hyperbolic simple closed geodesic on each
convex surface. If this were true, one may try to argue that $\gamma_g$
is non-hyperbolic. However, Grjuntal' \cite{Gr} constructed an open set of
convex metrics on $S^2$ such that every simple closed geodesic is hyperbolic.
Elliptic closed geodesic does exist $C^2$ densely \cite{CO},
just that it may not be simple.
\end{remark}

Let  us return to the proof of Proposition \ref{homo1n}.
If $\gamma_g$ is elliptic, then we are done.
If  $\gamma_g$ is hyperbolic,
we will prove that the choice of $g\in\cD_{2n}$ automatically implies
the existence of homoclinic intersections for $\gamma_g$.
Note that no perturbation is needed in the following proof.
\begin{lem}\label{miss}
Let $g\in\cD_{2n}$ such that $\gamma_g$ is a hyperbolic closed geodesic.
Then  $\gamma_g$ admits some transverse homoclinic intersections.
\end{lem}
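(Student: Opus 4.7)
My plan is to adapt the topological argument from Lemma~\ref{PnF} to the boundary setting, since the closed geodesic $\gamma_g$ corresponds to the boundary of the Birkhoff annulus $A_g$ rather than to an interior fixed point of $F_g$. The reduction to the interior case is via the prime--end compactification of $A_g$ developed by Mather in \cite{Mat81,Mat82}.

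First I would observe that, since $\gamma_g$ is hyperbolic for the geodesic flow, the stable and unstable manifolds $W^{s,u}(\gamma_g)$ are two--dimensional surfaces in $M_g$, and their intersections with $A_g$ consist of four invariant curves of $f=F_g^{2n}$ (two stable and two unstable branches) emanating from the boundary circles $\gamma_g\times\{0,\pi\}\subset\partial A_g$. Passing to $F_g^{2n}$ is essential so that each branch is preserved as a set, exactly as in the setup before Lemma~\ref{PnF}. I would then apply Mather's prime--end compactification to $A_g$, viewing $\gamma_g$ as a pair of boundary fixed points of prime--end rotation number zero (the vanishing of the rotation number should follow from hyperbolicity of $\gamma_g$ as a closed orbit of the flow together with the explicit asymptotics of $F_g$ near the boundary of $A_g$). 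Mather's theorem then yields a common closure $E$ for all four branches of $W^{s,u}(\gamma_g)\cap A_g$ inside the compactified annulus, exactly as in the interior case.

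Next I would mimic the simple--closed--curve construction from the proof of Lemma~\ref{PnF}. Picking one stable branch $W^s_+(\gamma_g)$ and closing the gate at $\gamma_g$ along the prime--end boundary produces a simple closed curve $\sigma^s\subset A_g$. By the common--closure property, some unstable branch $W^u_+(\gamma_g)$ must enter the quadrant bounded by $\sigma^s$, and I form a second simple closed curve $\sigma^u$ in the same manner. These two curves meet transversally at $\gamma_g$. Since $A_g$ is a topological annulus, the algebraic intersection number of two simple closed curves vanishes, so $\sigma^s\cap\sigma^u$ must contain at least one further point in the interior of $A_g$, which is a homoclinic intersection of $W^s(\gamma_g)$ with $W^u(\gamma_g)$. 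The hypothesis $g\in\cD_{2n}\subset\cV^r_{2n}$ (via Proposition~\ref{opendense1}) guarantees that this intersection is automatically transverse, so no additional perturbation is needed, in agreement with the remark preceding the lemma.

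The main obstacle, and the technical heart of the argument, is the rigorous transfer of Mather's theorem to the boundary fixed point via the prime--end compactification: one must verify that the prime--end rotation number at $\gamma_g$ equals zero, and confirm that the common--closure conclusion survives the compactification so that the algebraic intersection argument runs verbatim. Once those two points are in place, the topological reasoning is identical to that of Lemma~\ref{PnF}.
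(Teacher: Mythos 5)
Your plan correctly identifies the difficulty: in the Birkhoff annulus $A_g$ attached to $\gamma_g$ itself, the geodesic $\gamma_g$ corresponds to the two \emph{boundary circles} of $A_g$, not to an interior fixed point, so Lemma~\ref{PnF} does not apply directly. But the remedy you propose---extending Mather's common-closure theorem across the prime-end compactification to a ``boundary fixed point''---does not work as stated and is, by your own admission, left unresolved. The object on $\partial A_g$ is an invariant circle, not a fixed point; Mather's theorem in \cite{Mat81} and its use in Lemma~\ref{PnF} concern the four branches of $W^{s,u}$ of a hyperbolic \emph{interior} fixed point, and there is no off-the-shelf version for an invariant boundary circle whose stable and unstable curves spiral into it. Phrasing $\gamma_g$ as ``a pair of boundary fixed points of prime-end rotation number zero'' conflates the periodic orbit with the prime-end circle it bounds, and the subsequent ``close the gate at $\gamma_g$'' step presupposes a single interior point at which the two constructed curves cross transversally---exactly what is unavailable here. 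So the ``main obstacle'' you flag is in fact a genuine gap, not a technicality.

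The paper avoids the boundary issue entirely by changing the section. By Lyusternik--Shnirel'man there is a second simple closed geodesic $\eta_g$, and on a convex sphere $\gamma_g$ and $\eta_g$ must intersect transversally (a Gauss--Bonnet argument: if they were disjoint, the annulus between them would have zero total curvature). Forming the Birkhoff annulus $\hat A=A_{\eta_g}$ and its Poincar\'e map $\hat F_g$, the vector $(y_0,u_0)$ with $y_0\in\gamma_g\cap\eta_g$ and $u_0=\dot\gamma_g(y_0)$ is now an \emph{interior} hyperbolic fixed point of $\hat F_g$ (and of $\hat F_g^2$), with all four branches fixed since $\gamma_g$ is simple. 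One then checks that $g\in\cD_{2n}$ still supplies the two hypotheses needed for the Lemma~\ref{PnF} argument (nonlinear stability of elliptic fixed points of $\hat F_g^2$, transversality or disjointness of invariant-manifold components, which hold because every fixed point of $\hat F_g^2$ corresponds to a geodesic wrapping at most twice and hence lies in $P_2(F_g)$ or is $\gamma_g$ itself), and the interior topological argument runs verbatim. This is what lets the proof proceed \emph{without} any further perturbation, in line with the remark before the lemma. If you wish to keep a boundary-based proof you would first have to prove a common-closure statement for the boundary circle in the prime-end annulus; it is cleaner, and in the spirit of the paper, to move $\gamma_g$ into the interior by choosing a different Birkhoff section.
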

\begin{proof}
Let $\eta_g$ be a second simple closed geodesic on $S^2$, whose existence is given by
Lyusternik-Shnirel'man theorem \cite{LySh,Tai}.
Let $\hat F_g$ be the new Poincar\'e map induced on the new
Birkhoff annulus $\hat A=A_{\eta_g}$. Pick  $y_0\in\gamma_g\cap \eta_g$
and $(y_0,u_0)$ be the point in $\hat A$ that generates $\gamma_g$.
For the second iterate $\hat F_g^2$, the following hold:
\begin{enumerate}
\item every fixed point $(y,u)$ of $\hat F_g^2$ is also fixed by $F_g^{2n}$,
since it corresponds to a closed geodesic
of uniformly bounded length;

\item every elliptic fixed point of $\hat F_g^2$ is nonlinearly stable, since
the corresponding closed geodesic is (see Proposition \ref{dense});

\item any two branches of the stable and unstable manifolds
of two hyperbolic fixed points of $\hat F_g^2$ either don't intersect
or admit some transverse intersections, since
the corresponding components of the closed geodesics are
(see Proposition \ref{opendense1});

\item $(y_0,u_0)$ is a  hyperbolic fixed point of $\hat F_g$ and all four branches
are fixed by $\hat F_g^2$ (since $\gamma_g$ is a simple closed geodesic).
\end{enumerate}
Then the same argument given in Lemma \ref{PnF} shows the
existence of transverse intersections between the stable and unstable manifolds
of the fixed point $(y_0,u_0)$ for any $g\in\cD_{2n}$.
This completes the proof.
\end{proof}

\begin{proof}[Proof of Proposition \ref{homo1n}]
Combining Lemma \ref{PnF} and \ref{miss}, we see that for any $g\in \cD_{2n}$,
there exist transverse homoclinic intersections for any closed geodesic in
$\Gamma_n(g)$. This implies that $\cD_{2n}\subset\cW^r_n$.
Therefore $\cW^r_n$ must be dense in $\cV^r_n$,
since $\cD_{2n}$ is dense in $\cV^r_{2n}$, which is also dense in $\cG^r_+$.
This completes the proof of Proposition \ref{homo1n}.
\end{proof}

\begin{proof}[Proof of Theorem \ref{main}]
Let $\cW^r_n$ be the open and dense subset of $\cG^r_+$ given by Proposition \ref{homo1n}.
Then the set $ \cR^r_{h}=\bigcap_{n\ge 1}\cW^r_n$
contains a residual subset of $\cG^r_+$. Let $g\in \cR^r_{h}$.
Then each hyperbolic closed geodesic of the geodesic flow on
the unit tangent bundle $M_g$ admits some transverse homoclinic intersections.
Combining with Theorem \ref{KS},
we complete the proof of Theorem \ref{main}.
\end{proof}

\section*{Acknowledgments}
This research is supported in part by National Science Foundation.
The authors are very grateful to the anonymous referee for many useful
comments and suggestions,
which helped them to improve the presentation of the paper significantly.

\end{document}